%
%
%
%
\documentclass{amsart}

\usepackage{amsmath}
\usepackage{amssymb}
\usepackage{amsthm}
\usepackage{graphicx}

\newtheorem{theorem}{Theorem}[section]
\newtheorem{lemma}[theorem]{Lemma}
\newtheorem{proposition}[theorem]{Proposition}

\theoremstyle{definition}
\newtheorem{definition}[theorem]{Definition}
\newtheorem{example}[theorem]{Example}


\begin{document}

\title{Knotted handle decomposing spheres for handlebody-knots}

\author{Atsushi Ishii}
\address{Institute of Mathematics, University of Tsukuba, 1-1-1
Tennodai, Tsukuba, Ibaraki 305-8571, Japan}
\curraddr{}
\email{aishii@math.tsukuba.ac.jp}
\thanks{The first author was supported by JSPS KAKENHI Grant Number 21740035.}

\author{Kengo Kishimoto}
\address{Osaka Institute of Technology, 5-16-1 Omiya, Asahi-ku Osaka
535-8585 Japan}
\email{kishimoto@ge.oit.ac.jp}
\thanks{}

\author{Makoto Ozawa}
\address{Department of Natural Sciences, Faculty of Arts and Sciences,
Komazawa University, 1-23-1 Komazawa, Setagaya-ku, Tokyo, 154-8525, Japan}
\email{w3c@komazawa-u.ac.jp}
\thanks{The third author was supported by JSPS KAKENHI Grant Number 23540105.}

\subjclass[2000]{Primary 57M25}

\date{}


\keywords{}

\begin{abstract}
We show that a handlebody-knot whose exterior is boundary-irreducible
has a unique maximal unnested set of knotted handle decomposing spheres
up to isotopies and annulus-moves.
As an application, we show that the handlebody-knots $6_{14}$ and
$6_{15}$ are not equivalent.
We also show that some genus two handlebody-knots with a knotted handle
decomposing sphere can be determined by their exteriors.
As an application, we show that the exteriors of $6_{14}$ and $6_{15}$
are not homeomorphic.
\end{abstract}

\maketitle

\section{Introduction}

A \textit{genus $g$ handlebody-knot} is a genus $g$ handlebody embedded
in the $3$-sphere $S^3$.
Two handlebody-knots are \textit{equivalent} if one can be transformed
into the other by an isotopy of $S^3$.
A handlebody-knot is \textit{trivial} if it is equivalent to a
handlebody standardly embedded in $S^3$, whose exterior is a
handlebody.
We denote by $E(H)=S^3-\operatorname{int}H$ the exterior of a
handlebody-knot $H$.

\begin{definition} \label{def:n-decomposing sphere}
A $2$-sphere $S$ in $S^3$ is an \textit{$n$-decomposing sphere} for a
handlebody-knot $H$ if
\begin{itemize}
\item[(1)] $S\cap H$ consists of $n$ essential disks in $H$, and
\item[(2)] $S\cap E(H)$ is an incompressible and not boundary-parallel
surface in $E(H)$.
\end{itemize}
\end{definition}

In some cases it might be suitable to replace the condition (2) in
Definition~\ref{def:n-decomposing sphere} with the condition
\begin{itemize}
\item[(2)'] $S\cap E(H)$ is an incompressible, boundary-incompressible,
and not boundary-parallel surface in $E(H)$,
\end{itemize}
although we adopt the condition (2) in this paper.
The two definitions are equivalent if $n=1$, or $n=2$ and $E(H)$ is
boundary-irreducible.

A handlebody-knot $H$ is \textit{reducible} if there exists a
$1$-decomposing sphere for $H$, where we remark that (2) follows from
(1) when $n=1$.
A handlebody-knot is \textit{irreducible} if it is not reducible.
A handlebody-knot $H$ is irreducible if $E(H)$ is boundary-irreducible.
The converse is true for a genus two handlebody-knot
$H$.
In particular, for a genus two handlebody-knot $H$, the following are
equivalent:
\begin{itemize}
\item[(1)] $H$ is irreducible.
\item[(2)] $\pi_1(E(H))$ is indecomposable with respect to free products.
\item[(3)] $E(H)$ is boundary-prime.
\item[(4)] $E(H)$ is boundary-irreducible.
\end{itemize}
By \cite{Tsukui75}, we have the equivalence between (1) and (2).
By \cite{Jaco69}, we have the equivalence between (2) and (3) for a
handlebody-knot $H$ of arbitrary genus.
The conditions (3) and (4) are equivalent if $E(H)$ is not a solid torus
(cf.~\cite[Proposition~2.15]{Suzuki75}).
We remark that there is an irreducible genus $g\neq2$ handlebody-knot
whose exterior is not boundary-irreducible
(cf.~\cite[Theorem~5.4]{Suzuki75}).


The decomposition by $1$-decomposing spheres is unique for a trivial
handlebody-knot
and a genus two handlebody-knot~\cite{Tsukui70}.
The uniqueness is not known for a genus $g\geq3$ handlebody-knot.

\begin{definition}
A $2$-sphere $S$ in $S^3$ is a
\textit{knotted handle decomposing sphere} for a handlebody-knot $H$ if
\begin{itemize}
\item[(1)]
$S\cap H$ consists of two parallel essential disks in $H$, and
\item[(2)]
$S\cap E(H)$ is an incompressible and not boundary-parallel surface in
$E(H)$.
\end{itemize}
\end{definition}

We say that a $2$-sphere $S$ bounds $(B,K;H)$ if $S$ bounds a $3$-ball
$B$ so that $S\cap H$ consists of two parallel essential disks in $H$,
and that $H\cup E(B)$ is equivalent to a regular neighborhood of a
nontrivial knot $K$.
A knotted handle decomposing sphere for $H$ bounds $(B,K;H)$.
A $2$-sphere $S$ which bounds $(B,K;H)$ is not always a knotted handle
decomposing sphere for $H$
(see the left picture of Figure~\ref{fig:not knotted}).
In this paper, we represent a handlebody-knot by a spatial trivalent
graph whose regular neighborhood is the handlebody-knot as shown in
Figure~\ref{fig:not knotted}.
Then the intersection of the spatial trivalent graph and the $2$-sphere
indicates two disks.

If $H$ is a genus $g\geq2$ handlebody-knot whose exterior is
boundary-irreducible, then a $2$-sphere $S$ which bounds $(B,K;H)$ is a
knotted handle decomposing sphere for $H$, where we note that $g\geq2$
implies that $S\cap E(H)$ is not boundary-parallel in $E(H)$, and that
the boundary-irreducibility implies the incompressibility of
$S\cap E(H)$.
A trivial handlebody-knot has no knotted handle decomposing sphere,
since the decomposition by $1$-decomposing spheres is unique for a
trivial handlebody-knot.

\begin{figure}
\begin{center}
\includegraphics[scale=0.6]{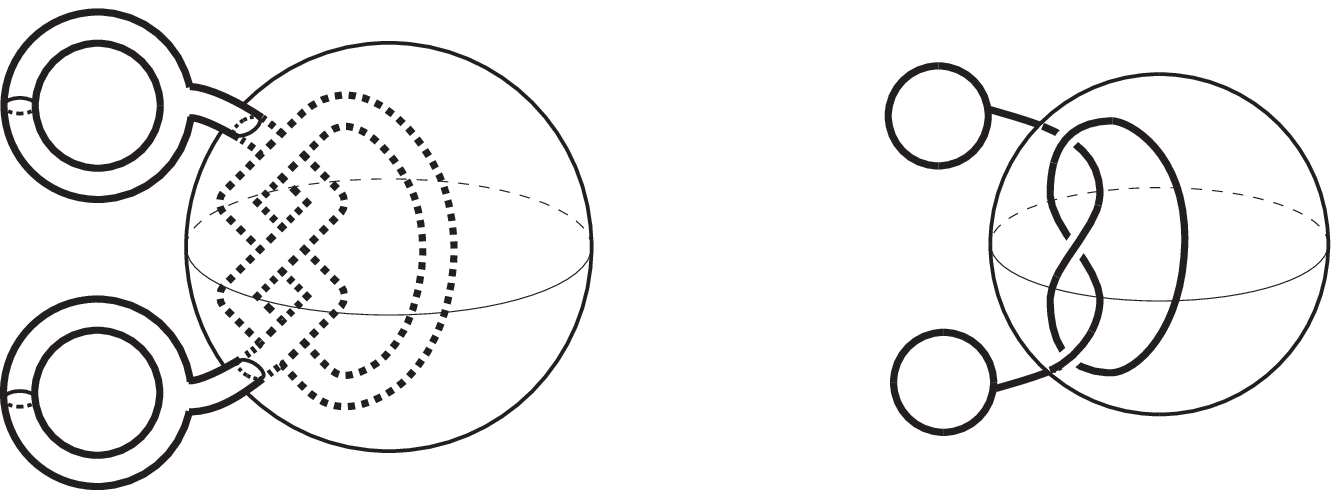}
\end{center}
\caption{}
\label{fig:not knotted}
\end{figure}

In~\cite{IshiiKishimotoMoriuchiSuzuki12}, Moriuchi, Suzuki and the first
and second authors gave a table of genus two handlebody-knots up to six
crossings, and classified them according to the crossing number and the
irreducibility.
There are three pairs of handlebody-knots whose fundamental groups are
isomorphic in the table.
S.~Lee and J.~H.~Lee~\cite{LeeLee12} gave inequivalent genus two
handlebody-knots with homeomorphic exteriors including the two pairs
$5_1$, $6_4$ and $5_2$, $6_{13}$ in the table, and distinguish them by
classifying essential surfaces in the exteriors.
We note that Motto~\cite{Motto90} gave different examples with
homeomorphic exteriors which do not appear in the above table.

The pair $6_{14}$, $6_{15}$ is the remaining pair of handlebody-knots
whose fundamental groups are isomorphic.
In Section~\ref{sec:decomposition}, we show that a handlebody-knot whose
exterior is boundary-irreducible has a unique maximal unnested set of
knotted handle decomposing spheres up to isotopies and annulus-moves.
As an application, we show that the handlebody-knots $6_{14}$ and
$6_{15}$ are not equivalent.
In Section~\ref{sec:hdb-knots and their exteriors}, we show that some
genus two handlebody-knots with a knotted handle decomposing sphere can
be determined by their exteriors.
As an application, we show that the exteriors of the handlebody-knots
$6_{14}$ and $6_{15}$ are not homeomorphic.

\section{A unique decomposition for a handlebody-knot}
\label{sec:decomposition}

Let $H$ be a handlebody-knot in $S^3$, and $S$ a knotted handle
decomposing sphere for $H$ which bounds $(B,K;H)$.
Let $A$ be an annulus properly embedded in $E(H)-\operatorname{int}B$ so
that $A\cap S=l$ is an essential loop in the annulus $S\cap E(H)$, and
that $A\cap\partial H=l'$ bounds an essential disk $D$ in $H$, where
$\partial A=l\cup l'$ (see Figure~\ref{fig:annulus-move}).
Put $T=(S\cap E(H))\cup(B\cap\partial H)$.
Let $A'$ be an annulus obtained from $T$ by cutting along $l$ and
pasting two copies of $A$, where $T$ is slightly isotoped so that
$T\cap H=\emptyset$.
Then we have a new knotted handle decomposing sphere $S'$ obtained from
$A'$ by attaching two parallel copies of $D$ to $\partial A'$.
We say that $S'$ is obtained from $S$ by an \textit{annulus-move} along
$A$.
For example, in Figure~\ref{fig:annulus-move (example)}, $S'$ is
obtained from $S$ by an annulus-move along $A$, where we note that $S$
and $S'$ are not isotopic in the exterior of the handlebody-knot.

\begin{figure}[htbp]
\includegraphics[trim=0mm 0mm 0mm 0mm, width=.6\linewidth]{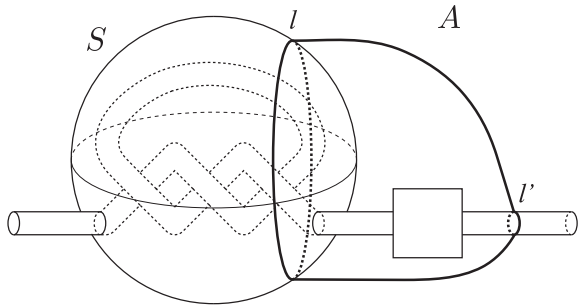}
\caption{An annulus-move along $A$}
\label{fig:annulus-move}
\end{figure}

\begin{figure}
\includegraphics[scale=0.6]{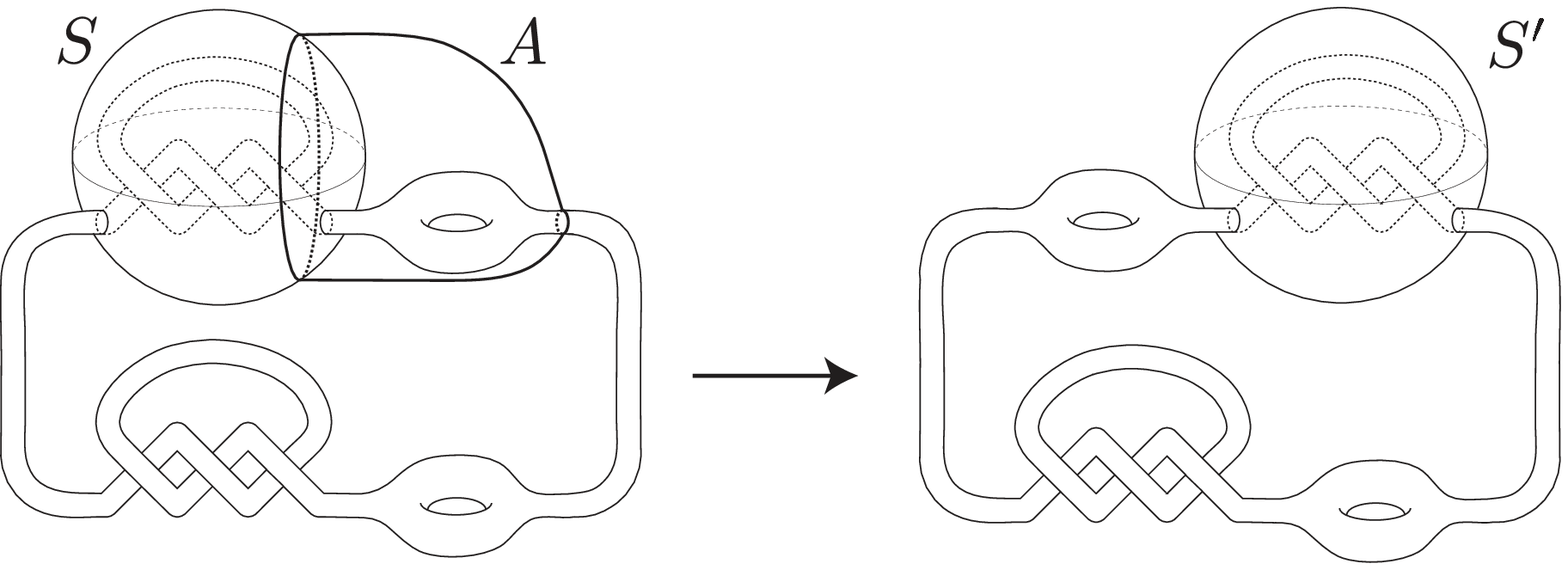}
\caption{}
\label{fig:annulus-move (example)}
\end{figure}

A set $\mathcal{S}=\{S_1, \ldots,S_n\}$ of knotted handle decomposing
spheres for a handlebody-knot $H$ is \textit{unnested} if each sphere
$S_i$ bounds $(B_i,K_i;H)$ so that $B_i\cap B_j=\emptyset$ for
$i\ne j$.
An unnested set $\mathcal{S}$ is \textit{maximal} if $n\geq m$ for any
unnested set $\{S'_1,\ldots,S'_m\}$ of knotted handle decomposing
spheres for $H$.
By the Haken--Kneser finiteness theorem~\cite{Haken68,Kneser29}, there
exists a maximal unnested set of knotted handle decomposing spheres for
$H$.
By Schubert's theorem \cite{Schubert49}, $K_i$ is prime for any $i$ if
$\mathcal{S}$ is maximal.

\begin{lemma} \label{lem:disjoint khds}
Let $H$ be a handlebody-knot whose exterior is boundary-irreducible.
Let $\mathcal{S}=\{S_1,\ldots,S_n\}$ be an unnested set of knotted
handle decomposing spheres for $H$ such that $S_i$ bounds $(B_i,K_i;H)$
and that $K_i$ is prime for any $i$.
Let $\mathcal{S}'=\{S'_1,\ldots,S'_m\}$ be a set of 2-decomposing
spheres for $H$.
Then $\mathcal{S}$ can be deformed so that $S_i\cap S_j'=\emptyset$ for
any $i,j$ by isotopies and annulus-moves.
\end{lemma}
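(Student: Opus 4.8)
The plan is to remove all intersections by an induction on the lexicographically ordered complexity $c=(e,p,N)$, where $N$ is the number of curves of $\bigcup_i S_i\cap\bigcup_j S_j'$, $p$ is the number of points in which these curves meet $\partial H$ (equivalently, the number of points of $\bigl(\bigcup_i\partial(S_i\cap H)\bigr)\cap\bigl(\bigcup_j\partial(S_j'\cap H)\bigr)$ on $\partial H$), and $e$ is the number of arcs and circles of $\bigl(\bigcup_i S_i\cap\bigcup_j S_j'\bigr)\cap E(H)$ that are essential in the annulus $S_i\cap E(H)$ on which they lie (an arc being essential if it joins the two boundary circles of that annulus, a circle if it is parallel to the core). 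We use freely that $H$ and $S^3$ are irreducible, and that because $E(H)$ is boundary-irreducible each of the annuli $S_i\cap E(H)$ and $S_j'\cap E(H)$ is incompressible \emph{and} boundary-incompressible in $E(H)$, by the remark following Definition~\ref{def:n-decomposing sphere}. After general position we assume $c$ is minimized; if $N>0$ we show $c$ can be strictly decreased, either by an ambient isotopy of $S^3$ fixing $\mathcal S'$ or by a single annulus-move on one $S_i$ carried out disjointly from the balls $B_k$ ($k\neq i$), so that $\mathcal S$ stays unnested.

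Suppose first $e=0$. Standard outermost-arc arguments — using boundary-incompressibility of the annuli in $E(H)$, boundary-irreducibility of $E(H)$ for arcs lying in $E(H)$, and essentiality of the disks $S_i\cap H$ together with irreducibility of $H$ for arcs lying in $H$ — let us isotope $\mathcal S$ so that no curve of $\bigcup_i S_i\cap\bigcup_j S_j'$ meets $\partial H$, strictly reducing $p$ while keeping $e=0$. Then each intersection curve lies in $\operatorname{int}H$ or in $\operatorname{int}E(H)$. An innermost curve in $\operatorname{int}H$ bounds a subdisk of a disk of $S_i\cap H$ and of a disk of $S_j'\cap H$, so a cut-and-paste of $S_i$ across the ball they bound in $H$ (using irreducibility of $H$) removes it, decreasing $N$ and changing $S_i$ only by an isotopy. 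An innermost curve in $\operatorname{int}E(H)$ is, since $e=0$, inessential in $S_i\cap E(H)$, hence bounds a disk there whose boundary, by incompressibility of $S_j'\cap E(H)$, also bounds a disk in $S_j'$; a cut-and-paste across the ball between them removes it, decreasing $N$, with not-boundary-parallelism of the annuli ruling out the degenerate configurations. Iterating clears all intersections when $e=0$.

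Suppose now $e>0$, and pick a curve $c$ of $S_i\cap S_j'$ essential in $S_i\cap E(H)$. If $c$ is inessential in $S_j'\cap E(H)$, the innermost-disk argument applied on the $S_j'$ side removes it and lowers $e$. Otherwise $c$ is a circle essential in both annuli; after first running the $e=0$ reductions that do not lower $e$ — so that no intersection curve meets $\partial H$, none lies in $\operatorname{int}H$, and none is inessential in any $S_j'\cap E(H)$ — we may take $c$ to be the intersection circle nearest one end of the annulus $S_j'\cap E(H)$, so that the disk $\delta'\subset S_j'$ it bounds with $\operatorname{int}\delta'\cap S_i=\emptyset$ consists of a single disk $D$ of $S_j'\cap H$ together with a sub-annulus $A$ of $S_j'\cap E(H)$. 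This $\delta'$ cannot lie in $E(H)$, for then it would compress the incompressible annulus $S_i\cap E(H)$ along the essential curve $c$; hence $\operatorname{int}\delta'\subset E(B_i)$, so $A\subset E(H)-\operatorname{int}B_i$, with $\partial A=c\cup\partial D$, $c$ essential in $S_i\cap E(H)$, and $\partial D$ bounding the essential disk $D$ of $H$. This is precisely the data for an annulus-move along $A$; by definition it replaces $S_i$ by a knotted handle decomposing sphere $S_i''$, and $c$ is no longer an intersection curve; checking that $S_i''\cap E(H)$ contains no new arc or circle essential in it then yields a strict decrease of $e$. Iterating brings $e$ to $0$, and the previous paragraph finishes the proof.

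The hard part will be the last step: showing that an annulus-move along $A\subset S_j'$ produces a sphere $S_i''$ whose intersection with $E(H)$ contains no arc or circle essential in $S_i''\cap E(H)$ — which is what legitimizes putting $e$ first in the complexity, since an annulus-move need not decrease $p$ or $N$ — and that the move can be performed so as to leave each $B_k$, $k\neq i$, untouched, keeping $\mathcal S$ unnested. The remaining, more routine, obligation is to check at each cut-and-paste that $S_i\cap H$ is still two parallel essential disks and $S_i\cap E(H)$ still an incompressible, not-boundary-parallel annulus, i.e. that the modified $S_i$ is again a knotted handle decomposing sphere.
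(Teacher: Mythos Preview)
There are two genuine gaps, and both are signalled by the fact that you never invoke the hypotheses that each $K_i$ is knotted and prime.

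First, the essential-arc case is not handled. Your assertion that, after running ``the $e=0$ reductions that do not lower $e$'', no intersection curve meets $\partial H$ is unjustified when $e>0$. Boundary-incompressibility of $A_i=S_i\cap E(H)$ and of $A_j'=S_j'\cap E(H)$ only lets you eliminate arcs that are \emph{inessential in at least one} of the two annuli: an outermost inessential arc on one side cuts off a disk which, if its arc on the other side were essential, would boundary-compress that side. But an arc essential in \emph{both} $A_i$ and $A_j'$ produces no such outermost disk and survives all of your reductions; your sentence ``Otherwise $c$ is a circle essential in both annuli'' simply skips this case. The paper disposes of it by taking a rectangle $\Delta$ of $A_j'\cap B_i$ between two adjacent essential arcs: by minimality $\partial\Delta$ winds twice longitudinally around $B_i-\operatorname{int}H$, so $\Delta$ is an essential disk in the exterior of $K_i$, contradicting that $K_i$ is a nontrivial knot.

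Second, in the essential-circle case your step ``hence $\operatorname{int}\delta'\subset E(B_i)$'' is a non sequitur. Knowing that $\delta'$ contains a disk $D\subset H$ only tells you $\delta'\not\subset E(H)$; since $\operatorname{int}\delta'\cap S_i=\emptyset$ and $\partial\delta'=c\subset S_i$, the disk $\delta'$ lies either in $B_i$ or in $E(B_i)$, and nothing you have said excludes the former (indeed $D$ would then be a meridian of the cylinder $H\cap B_i$, parallel to the disks of $S_i\cap H$). If the outermost subannulus $F=\delta'\cap E(H)$ lies in $B_i-\operatorname{int}H$, one needs the \emph{primeness} of $K_i$ to isotope $F$ off $B_i$; this is exactly how the paper handles it. Only after $F$ is forced outside $B_i$ does the annulus-move along $F$ become available.

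The paper's organisation is also simpler: it minimises $|A_i\cap A_j'|$ over isotopies \emph{and} annulus-moves simultaneously, assumes the intersection already consists of essential arcs or essential loops in both annuli, and derives a contradiction in each case; no triple lexicographic complexity is needed.
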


\begin{proof}
Put $A_i=S_i\cap E(H)$ for $i=1,\ldots,n$ and $A'_j=S'_j\cap E(H)$ for
$j=1,\ldots,m$.
We may assume that $A_i\cap A'_j$ consists of essential arcs or loops in
both $A_i$ and $A'_j$, and that $|A_i\cap A'_j|$ is minimal up to
isotopies and annulus-moves.

In the case that $A_i\cap A'_j$ consists of essential arcs for some $i$
and $j$, let $\Delta$ be a component of $A'_j\cap B_i$ which is
cobounded by two adjacent arcs of $A_i\cap A'_j$ in $A'_j$.
By the minimality of $|A_i\cap A'_j|$, $\partial\Delta$ winds around
$B_i-\operatorname{int}H$ longitudinally twice, where we note that
$B_i-\operatorname{int}H$ is the exterior of $K_i$.
Hence $\Delta$ is an essential disk in $B_i-\operatorname{int}H$, which
implies that $B_i-\operatorname{int}H$ is boundary-reducible and $K_i$
is a trivial knot, a contradiction.

In the case that $A_i\cap A'_j$ consists of essential loops for some $i$
and $j$, let $F$ be an outermost subannulus of $A'_j$ which is cut by
$A_i\cap A'_j$.
If $F$ is contained in $B_i$, then by the primeness of $K_i$, we can
isotope off $F$ from $B_i$.
Hence $F$ is in the outside of $B_i$.
Then by an annulus move for $S_i$ along the annulus $F$, we can reduce
$|A_i\cap A'_j|$.
This contradicts to the minimality of $|A_i\cap A'_j|$.
\end{proof}

\begin{theorem} \label{thm:decomposition}
A handlebody-knot $H$ whose exterior is boundary-irreducible has a
unique maximal unnested set of knotted handle decomposing spheres for
$H$ up to isotopies and annulus-moves.
\end{theorem}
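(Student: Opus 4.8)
The plan is to follow the standard route for uniqueness theorems of this type: take two maximal unnested sets, use Lemma~\ref{lem:disjoint khds} to pull them apart, and then match them up sphere by sphere.

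Concretely, let $\mathcal{S}=\{S_1,\dots,S_n\}$ and $\mathcal{S}'=\{S'_1,\dots,S'_m\}$ be maximal unnested sets of knotted handle decomposing spheres for $H$, with $S_i$ bounding $(B_i,K_i;H)$ and $S'_j$ bounding $(B'_j,K'_j;H)$. First I would note $n=m$ by maximality and that all the $K_i$ and $K'_j$ are prime by Schubert's theorem, as recorded before Lemma~\ref{lem:disjoint khds}. Since a knotted handle decomposing sphere is in particular a $2$-decomposing sphere, Lemma~\ref{lem:disjoint khds} applies and lets me deform $\mathcal{S}$ by isotopies and annulus-moves so that $S_i\cap S'_j=\emptyset$ for all $i,j$. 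I would then set $A_i=S_i\cap E(H)$ and $A'_j=S'_j\cap E(H)$ (disjoint incompressible, non-boundary-parallel annuli) and keep in mind two facts: $B_i\cap E(H)$ is the exterior $E(K_i)$ of the prime nontrivial knot $K_i$, with boundary torus $A_i\cup(B_i\cap\partial H)$; and the ``tube annulus'' $B_i\cap\partial H$ is meridional, its core bounding a disk in $H$.

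The crux is to show that, after one further isotopy, each $S'_j$ coincides with some $S_i$; since $n=m$ this gives the required bijection. The spheres $S_1,\dots,S_n$ cut $S^3$ into the balls $B_i$ together with one more region $Y=S^3-\bigcup_i\operatorname{int}B_i$, and the connected annulus $A'_j$ lies in $\operatorname{int}B_i$ for a single $i$, or in $\operatorname{int}Y$. In the first case, $A'_j$ is incompressible in $E(K_i)$ with $\partial A'_j$ a pair of meridians of $K_i$ (being essential curves on the meridional tube annulus); capping off with meridian disks yields a $2$-sphere meeting the prime knot $K_i$ in two points, hence an inessential one, so $A'_j$ is boundary-parallel in $E(K_i)$, say parallel to an annulus $\beta$ in the boundary torus $A_i\cup(B_i\cap\partial H)$. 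If $\beta$ avoids $A_i$ then $\beta\subset\partial H$ and $A'_j$ is boundary-parallel in $E(H)$, contradicting that $S'_j$ is a knotted handle decomposing sphere; otherwise $\beta\supset A_i$ and pushing $A'_j$ across the parallelism (dragging the two meridian disks $S'_j\cap H$ along the handle) identifies $S'_j$ with $S_i$. In the second case, if moreover $B'_j\subset Y$ then $B'_j$ is disjoint from every $B_i$, so $\mathcal{S}\cup\{S'_j\}$ is an unnested set of size $n+1$, contradicting maximality; hence $B'_j$ contains some $B_i$, and then $A_i\subset\operatorname{int}E(K'_j)$ and the same dichotomy with the roles of the two systems exchanged forces $S_i$ to be isotopic to $S'_j$. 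Finally I would check that distinct $S'_j$ are matched to distinct $S_i$ — otherwise, after the matching isotopies, $\mathcal{S}$ could be enlarged, again contradicting maximality — obtaining a bijection and an isotopy of $\mathcal{S}'$ onto the deformed $\mathcal{S}$; chained with the earlier deformations this realizes $\mathcal{S}$ and $\mathcal{S}'$ as related by isotopies and annulus-moves.

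The step I expect to be the real obstacle is this position analysis after disjointness has been achieved. The delicate points are: verifying carefully that $\partial A'_j$ is meridional and running the ``boundary-parallel in $E(K_i)$'' argument (in particular deciding exactly when the parallelism annulus in the boundary torus engulfs $A_i$, so that one really gets $S'_j\simeq S_i$ rather than some weaker conclusion); and handling the nested configuration $B'_j\supset B_i$, including ruling out that a single $B'_j$ swallows two of the $B_i$ (which would break the bijection) — here the primeness of the $K_i$, $K'_j$ together with the maximality of both systems is doing the essential work.
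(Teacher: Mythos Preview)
Your proposal is correct and follows essentially the same route as the paper: apply Lemma~\ref{lem:disjoint khds} to make the two maximal systems disjoint, then use primeness of the $K_i$, $K'_j$ together with maximality to force a one-to-one matching of the balls. The paper's argument is terser---it first normalizes away the case $B_i\cap B'_j\cong S^2\times I$ and then reasons directly with the trichotomy $B_i\subset B'_j$, $B'_j\subset B_i$, or $B_i\cap B'_j=\emptyset$---while you unpack the same content by tracking where the annuli $A'_j$ sit and invoking the boundary-parallel annulus argument in $E(K_i)$, but the substance is identical.
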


\begin{proof}
Let $\mathcal{S}=\{S_1,\ldots,S_n\}$, $\mathcal{S}'=\{S'_1\ldots,S'_n\}$
be maximal unnested sets of knotted handle decomposing spheres for $H$
such that $S_i$ and $S_j'$ bound $(B_i,K_i;H)$ and $(B'_j,K'_j;H)$,
respectively.
By Lemma~\ref{lem:disjoint khds}, we can deform $\mathcal{S'}$ so that
$S_i\cap S_j'=\emptyset$ for any $i,j$ by isotopies and annulus-moves.
We also deform $\mathcal{S'}$ so that $B_i\cap B'_j=\emptyset$ by
isotopies if $B_i\cap B'_j$ is homeomorphic to $S^2\times I$, where $I$
is an interval.
Then we have $B_i\subset B'_j$, $B'_j\subset B_i$, or
$B_i\cap B_j'=\emptyset$ for any $i,j$.
Since $\mathcal{S}'$ is maximal, for any $B_i$, there exists a $3$-ball
$B_j'$ such that $B_i\subset B'_j$ or $B'_j\subset B_i$.
Since $K_i$ and $K_j'$ are prime, $S_i$ is parallel to $S'_j$.
This gives a one-to-one correspondence between $\mathcal{S}$ and
$\mathcal{S}'$.
Hence a maximal unnested set of knotted handle decomposing spheres for
$H$ is unique up to isotopies and annulus-moves.
\end{proof}

\begin{proposition} \label{prop:khds with ball}
Let $H$ be a genus $g$ handlebody-knot whose exterior is
boundary-irreducible.
Let $\{S_1,\ldots,S_n\}$ be an unnested set of knotted handle
decomposing spheres for $H$ such that $S_i$ bounds $(B_i,K_i;H)$ for any
$i$.
Put $H':=H\cup B_{m+1}\cup\cdots\cup B_n$.
Then $\{S_1,\ldots,S_m\}$ is an unnested set of knotted handle
decomposing spheres for $H'$, or $g=1$ and $m=1$.
\end{proposition}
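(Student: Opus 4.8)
The plan is to verify the two conditions in the definition of a knotted handle decomposing sphere for each $S_i$ with $i\le m$, now viewed inside $H'=H\cup B_{m+1}\cup\cdots\cup B_n$, and to extract the exceptional case $g=1$, $m=1$ as the only obstruction to this working. First I would record that since $\{S_1,\ldots,S_n\}$ is unnested we may choose the $3$-balls $B_i$ pairwise disjoint, so attaching $B_{m+1},\ldots,B_n$ does not disturb $S_1,\ldots,S_m$ or the balls $B_1,\ldots,B_m$; thus each $S_i\cap H' = S_i\cap H$ still consists of two parallel essential disks, and $S_i$ continues to bound $(B_i,K_i;H')$ with the same knot $K_i$. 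This gives condition (1) for free. The content is condition (2): that $A_i := S_i\cap E(H')$ is incompressible and not boundary-parallel in $E(H')$.

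For incompressibility I would argue as in the discussion preceding Lemma~\ref{lem:disjoint khds}: $E(H')$ is obtained from $E(H)$ by removing the solid-torus-exteriors-of-knots pieces $B_j-\operatorname{int}H$ for $j>m$ (equivalently, by gluing in knot exteriors $E(K_j)$ along the annuli $S_j\cap\partial H$). One should check that $E(H')$ is still boundary-irreducible; this follows because a compressing disk for $\partial E(H')$ would, after standard innermost-disk/outermost-arc arguments against the incompressible annuli $S_j\cap E(H)$ separating $E(H)$ from the attached knot exteriors, yield either a compressing disk for $\partial E(H)$ (contradicting the hypothesis) or a meridian disk of some $E(K_j)$ with $K_j$ trivial (contradicting primeness/nontriviality of $K_j$). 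Boundary-irreducibility of $E(H')$ then forces $A_i$ incompressible, exactly as in the paragraph before the lemma.

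The genuinely delicate point — and the one I expect to be the main obstacle — is showing $A_i$ is not boundary-parallel in $E(H')$, and it is precisely here that the exceptional conclusion $g=1$, $m=1$ enters. In $E(H)$ the annulus $A_i$ was not boundary-parallel; but enlarging $H$ to $H'$ shrinks the exterior, and a priori $A_i$ could become parallel to a subsurface of $\partial E(H')$. I would analyze the region between $A_i$ and $\partial E(H')$: if $A_i$ were boundary-parallel, it would cut off from $E(H')$ a piece homeomorphic to $A_i\times I$, and tracking where the balls $B_j$ ($j>m$) sit relative to this product region. Because the $B_j$ are disjoint from $S_i$, each lies entirely on one side; the product region can only "absorb" $A_i$ into $\partial E(H')$ when essentially all the topology of $E(H')$ has been used up, which for a genus $g$ handlebody-knot pins down $g=1$ and leaves room for only the single sphere $S_1$, i.e. $m=1$. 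Concretely, when $g=1$ and $n=m$ there is nothing to prove, and when $g=1$ with some $B_j$ attached, $H'$ can become trivial (a solid torus standardly embedded) so that $S_1$ ceases to be a knotted handle decomposing sphere — this is the content of the escape clause. I would make this rigorous by noting that $\partial H'$ has genus $g$, that a knotted handle decomposing sphere requires a nonseparating or otherwise essential compressing disk structure available only when $g\ge 2$ (or $g=1$ with no further attachment), and invoking the uniqueness of the $1$-decomposing-sphere decomposition for solid tori to rule out knotted handle decomposing spheres once $H'$ is trivial. Assembling conditions (1) and (2), $\{S_1,\ldots,S_m\}$ is an unnested set of knotted handle decomposing spheres for $H'$ unless we are in the stated exceptional case.
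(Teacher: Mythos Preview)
Your overall plan---verify conditions (1) and (2) for each $S_i$ with $i\le m$, now in $H'$---matches the paper's, and your handling of (1) is fine. The difficulties are in your treatment of (2).

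\textbf{Incompressibility.} You take a long detour through boundary-irreducibility of $E(H')$, but the paper's argument is one line: since $H\subset H'$ we have $E(H')\subset E(H)$, so a compressing disk for $A_i=S_i\cap E(H')$ inside $E(H')$ is already a compressing disk for $S_i\cap E(H)$ inside $E(H)$, contradicting the hypothesis. Your route is not only longer but actually breaks: in the exceptional case $g=1$ the exterior $E(H')$ \emph{can} be boundary-reducible (e.g.\ $H'$ a trivial solid torus), so the intermediate claim you try to prove is false in general. The parenthetical ``equivalently, by gluing in knot exteriors'' also points the wrong direction and signals some confusion about which exterior sits inside which; once you see $E(H')\subset E(H)$ clearly, the whole step collapses.

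\textbf{Boundary-parallelness.} Here you have the right objects in view (the product region $A_i\times I$) but never isolate the decisive step. The paper argues as follows. If $A_i$ is parallel in $E(H')$ to an annulus $A\subset\partial E(H')$, then since $A_i$ was \emph{not} boundary-parallel in the larger manifold $E(H)$, the annulus $A$ cannot lie entirely in $\partial H\cap\partial H'$; hence $A$ must contain at least one of the ``new'' annuli $(B_{m+1}\cup\cdots\cup B_n)\cap\partial H'$. This immediately gives $g=1$ and $m=1$: the product region sits on the side of $A_i$ away from $B_i$ (otherwise $E(K_i)\cong A_i\times I$ and $K_i$ is trivial), so $E(H')\cong E(K_i)$ has torus boundary, forcing $g=1$; and any second ball $B_k$ with $k\le m$, $k\neq i$ would have its nontrivial knot exterior $B_k\setminus\operatorname{int}H$ trapped inside the solid torus $A_i\times I$, which is impossible, forcing $m=1$. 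Your discussion gestures at this but the mechanism you describe---$H'$ ``becoming trivial'' and then appealing to uniqueness of $1$-decomposing spheres---is not what drives the conclusion, and the assertion that knotted handle decomposing spheres require $g\ge 2$ is not correct (they exist for $g=1$; the point is precisely that for $g=1$ the annulus can become boundary-parallel).
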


\begin{proof}
Suppose that $S_i\in\{S_1,\ldots,S_m\}$ is not a knotted handle
decomposing sphere for $H'$.
If $S_i\cap E(H')$ is compressible in $E(H')$, then $S_i\cap E(H)$ is
also compressible in $E(H)$, a contradiction.
If $S_i\cap E(H')$ is parallel to an annulus $A\subset \partial E(H')$
in $E(H')$, then $A$ contains some annuli of
$(B_{m+1}\cup\cdots\cup B_n)\cap\partial H'$.
This shows that $g=1$ and $m=1$.
\end{proof}

\begin{proposition} \label{prop:b-irreducible with ball}
Let $H$ be a genus $g\geq2$ handlebody-knot, $S$ a $2$-sphere which
bounds $(B,K;H)$.
If $E(H\cup B)$ is boundary-irreducible, then so is $E(H)$.
\end{proposition}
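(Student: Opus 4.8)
The plan is to prove the contrapositive in the form: assume $E(H\cup B)$ is boundary-irreducible, suppose for contradiction that $\partial H$ compresses in $E(H)$, and produce a compressing disk for $\partial(H\cup B)$ in $E(H\cup B)$. First I would fix the splitting of $E(H)$ coming from $S$. Write $B'=\overline{S^3-B}$ and $A=S\cap E(H)$, a properly embedded annulus in $E(H)$ with $\partial A=\partial\delta_0\cup\partial\delta_1$, where $S\cap H=\delta_0\cup\delta_1$; then $A$ splits $E(H)$ into $\overline{B-H}$ and $\overline{B'-H}$. Since $S$ bounds $(B,K;H)$, the first piece $\overline{B-H}$ is the exterior of the nontrivial knot $K$, so it is irreducible and its torus boundary $A\cup(\partial H\cap B)$ is incompressible. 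The second piece is $\overline{B'-H}=E(H\cup B)$; here $A$ is a subannulus of $\partial E(H\cup B)=\partial(H\cup B)$, and $H\cup B$ is obtained from $H''=\overline{H-B}$ (a handlebody, or a union of handlebodies of total genus $g$) by attaching $B$ as a one-handle whose lateral annulus is $A$. Hence the core of $A$ is the boundary of the co-core disk of a one-handle of the genus $g$ handlebody $H\cup B$, so it, and with it each of $\partial\delta_0$ and $\partial\delta_1$, is essential in $\partial(H\cup B)$. Also $\partial\delta_0$ and $\partial\delta_1$ are essential in $\partial H$, since $\delta_0$ and $\delta_1$ are essential disks in $H$.

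Next I would take a compressing disk $D$ for $\partial H$ in $E(H)$, transverse to $A$, with $|D\cap A|$ minimal among all such disks, and claim $D\cap A=\emptyset$. By standard innermost-disk and outermost-arc arguments one first removes the removable components of $D\cap A$: an inessential circle is killed by surgering $D$ along an innermost subdisk of $A$, and a boundary-parallel arc is killed by a boundary-compression of $D$; in each case some resulting disk is again a compressing disk for $\partial H$ meeting $A$ in fewer components, contradicting minimality. So every component of $D\cap A$ is a core-parallel circle or a spanning arc of $A$. An innermost such circle, or an outermost such arc, bounds (resp.\ cuts off) a disk $D_0\subset D$ that lies in $\overline{B-H}$ or in $E(H\cup B)$. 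If $D_0\subset\overline{B-H}$, then $\partial D_0$ lies on the torus $\partial(\overline{B-H})$ and is essential there --- a core-parallel circle of $A$, or the union of a spanning arc of $A$ with a spanning arc of $\partial H\cap B$, which meets $\partial\delta_0$ once --- so $D_0$ compresses that incompressible torus, which is impossible. Hence $D_0\subset E(H\cup B)$, and $\partial D_0\subset\partial(H\cup B)$. In the circle case $\partial D_0$ is a core-parallel circle of $A$, hence essential in $\partial(H\cup B)$. In the arc case $\partial D_0=a\cup b$ with $a$ a spanning arc of $A$ and $b$ an arc of $\partial H\cap B'$, and it cannot bound a disk $\Delta$ in $\partial(H\cup B)$: otherwise $\Delta\cap(\partial\delta_0\cup\partial\delta_1)$ would be a $1$-manifold properly embedded in $\Delta$ meeting $\partial\Delta$ in exactly two points, one on $\partial\delta_0$ and one on $\partial\delta_1$, hence containing an arc joining $\partial\delta_0$ to $\partial\delta_1$ within the disjoint circles $\partial\delta_0\cup\partial\delta_1$, which is absurd. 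In either case $D_0$ is a compressing disk for $\partial(H\cup B)$ in $E(H\cup B)$, contradicting the hypothesis. Therefore $D\cap A=\emptyset$.

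With $D$ disjoint from $A$ it lies in $\overline{B-H}$ or in $E(H\cup B)$. It cannot lie in $\overline{B-H}$: then $\partial D$, being essential in $\partial H$, would be essential in the annulus $\partial H\cap B$, hence essential in the incompressible torus $\partial(\overline{B-H})$, which is impossible. So $D\subset E(H\cup B)$ and $\partial D\subset\partial H\cap B'\subset\partial(H\cup B)$. Finally $\partial D$ is essential in $\partial(H\cup B)$: if it bounded a disk $\Delta$ in $\partial(H\cup B)$, then either $\Delta$ is disjoint from $\partial\delta_0\cup\partial\delta_1$, so $\Delta\subset\partial H$, contradicting essentiality of $\partial D$ in $\partial H$; or an innermost one of $\partial\delta_0,\partial\delta_1$ lying in $\Delta$ bounds a subdisk of $\Delta$ whose interior lies on $\partial H$ (contradicting essentiality of $\delta_i$ in $H$) or on $A$ (impossible, since a boundary circle of the annulus $A$ bounds no disk in $A$). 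Hence $D$ is a compressing disk for $\partial(H\cup B)$ in $E(H\cup B)$, so $E(H\cup B)$ is boundary-reducible --- the desired contradiction. Consequently $E(H)$ is boundary-irreducible.

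The step I expect to require the most care is the intersection-minimization in the second paragraph: confirming that every non-removable component of $D\cap A$ really produces a compressing disk for $\partial(H\cup B)$ or an impossible compression of $\partial(\overline{B-H})$, and in particular handling the spanning-arc case through the elementary observation that $\partial\delta_0$ and $\partial\delta_1$ are disjoint. The structural facts of the first paragraph --- that $\overline{B-H}$ is a knot exterior, that the core of $A$ is the boundary of a co-core disk of a one-handle of $H\cup B$ and is therefore essential in $\partial(H\cup B)$, and that $\partial\delta_i$ is essential in $\partial H$ --- are routine but should be verified explicitly.
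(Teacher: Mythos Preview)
Your proof is correct and follows essentially the same approach as the paper's: assume a compressing disk $D$ for $\partial H$ in $E(H)$, make its intersection with the annulus $A=S\cap E(H)$ minimal, and use an outermost-disk argument together with the nontriviality of $K$ to produce a compressing disk for $\partial(H\cup B)$ in $E(H\cup B)$. Your version is considerably more detailed---you explicitly handle the essential-circle case, verify that the outermost subdisk has essential boundary in $\partial(H\cup B)$, and separately treat the case $D\cap A=\emptyset$---whereas the paper's four-sentence proof leaves these verifications to the reader.
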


\begin{proof}
Suppose that $E(H)$ is boundary-reducible and let $D$ be a compressing
disk in $E(H)$.
Since $E(H\cup B)$ is boundary-irreducible, $D$ intersects with the
annulus $A=S\cap E(H)$.
Since $E(H)$ is irreducible, we may assume that $D\cap A$ consists of
essential arcs in $A$.
Since the knot $K$ is nontrivial, an outermost disk of $D$ gives a
compressing disk in $E(H\cup B)$.
This is a contradiction.
\end{proof}

A \textit{($n$-component) handlebody-link} is a disjoint union of $n$
handlebodies embedded in the $3$-sphere $S^3$.
A \textit{non-split handlebody-link} is a handlebody-link $H$ which has
no disjoint $3$-balls $B_1,B_2$ such that $H\cap B_1\neq\emptyset$,
$H\cap B_2\neq\emptyset$.

\begin{proposition} \label{prop:b-irreducible without ball}
Let $H$ be a handlebody-knot, $S$ a $2$-sphere which bounds $(B,K;H)$.
Suppose that $H-\operatorname{int}B$ is a non-split handlebody-link
whose exterior is boundary-irreducible.
If $H-\operatorname{int}B$ is $2$-component handlebody-link or
$E(H\cup B)$ is a handlebody, then $E(H)$ is boundary-irreducible.
\end{proposition}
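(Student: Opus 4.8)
The plan is to argue by contradiction, analysing a compressing disk for $E(H)$ via the decomposition of the exterior induced by $S$. Put $W=B-\operatorname{int}H$ and $A=S\cap E(H)$. Then $A$ is an annulus properly embedded in $E(H)$ with $\partial A\subset\partial H$, $W$ is homeomorphic to the exterior $E(K)$ of the nontrivial knot $K$, the core $c$ of $A$ is a meridian of $K$ on $\partial W$, and $E(H)=E(H\cup B)\cup_A W$. Moreover $E(H-\operatorname{int}B)=E(H\cup B)\cup_A B$; since the ball $B$ is cut by $A$ into the annulus $A$ and the two disks $d_1,d_2=S\cap H$, this exhibits $E(H-\operatorname{int}B)$ as the result of attaching a $2$-handle to $E(H\cup B)$ along $c$, creating the two boundary disks $d_1,d_2\subset\partial(H-\operatorname{int}B)$. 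Because $H-\operatorname{int}B$ is non-split, $E(H-\operatorname{int}B)$ is irreducible, and by hypothesis it is boundary-irreducible. If $\operatorname{genus} H=1$ then $H-\operatorname{int}B$ is a ball, $H$ is a regular neighbourhood of $K$, and $E(H)=E(K)$ is boundary-irreducible; similarly if some component of $H-\operatorname{int}B$ is a ball then $H$ is isotopic to the remaining piece and the claim follows from boundary-irreducibility of $E(H-\operatorname{int}B)$. So assume $\operatorname{genus} H\ge2$ and that no component of $H-\operatorname{int}B$ is a ball.

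I first claim $A$ is incompressible in $E(H)$. A compressing disk for $A$ has boundary $c$ and lies in $W$ or in $E(H\cup B)$. In the first case $c$, a meridian of $K$, bounds a disk in $E(K)$, so $K$ is trivial, a contradiction. In the second case $c$ bounds a disk $\Delta$ in $E(H\cup B)$; capping $\Delta$ off with the core disk of the attached $2$-handle gives a $2$-sphere $\Sigma\subset E(H-\operatorname{int}B)$ whose two complementary sides contain $d_1$ and $d_2$ respectively, so $\Sigma$ bounds no ball, contradicting the irreducibility of $E(H-\operatorname{int}B)$. Thus $A$ is incompressible; since $\operatorname{genus} H\ge2$, neither side of $A$ is a solid torus, so $A$ is not boundary-parallel and $S$ is in fact a knotted handle decomposing sphere for $H$.

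Now let $D$ be a compressing disk for $E(H)$, isotoped so that $D\cap A$ is minimal. Incompressibility of $A$ removes all circles of $D\cap A$, and we may take each arc of $D\cap A$ essential (hence spanning) in $A$; choose $c$ to meet every such arc exactly once. Suppose $D\cap A\ne\emptyset$, and let $\beta$ be an arc of $D\cap A$ outermost in $D$, cutting off $D_0\subset D$ with $\partial D_0=\beta\cup\gamma$, $\gamma\subset\partial H$. If $D_0\subset W$ then $\gamma$ spans the annulus $a=\partial W-\operatorname{int}A$, so $\partial D_0$ meets the meridian $c$ once and is therefore a longitude of $K$; a disk bounded by a longitude in $E(K)$ forces $K$ trivial, a contradiction. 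Hence $D_0\subset E(H\cup B)$, and after a further isotopy $\partial D_0$ is essential in $\partial(H\cup B)$, so $D_0$ is an essential disk of $E(H\cup B)$. Here the two alternatives enter. If $H-\operatorname{int}B$ is a $2$-component handlebody-link, then $c$ separates $\partial(H\cup B)$ into the two surfaces corresponding to the two components, so $\partial(H\cup B)-\operatorname{int}A$ is disconnected, with the two circles of $\partial A$ in different components; then the arc $\gamma$, whose endpoints lie on those two circles, cannot exist — a contradiction. If instead $E(H\cup B)$ is a handlebody, then $\partial D_0$ meets $c$ exactly once (it meets $\beta$ once and misses $\gamma$), so $c$ is primitive in the handlebody $E(H\cup B)$; attaching a $2$-handle along $c$ then produces a handlebody of genus $\operatorname{genus} H-1\ge1$, which is not boundary-irreducible — contradicting the hypothesis on $E(H-\operatorname{int}B)$. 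Therefore $D\cap A=\emptyset$.

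Finally, with $D$ disjoint from $A$, $D$ lies in $W$ or in $E(H\cup B)$; it cannot lie in $W$, since $\partial D$ would be a circle in $a$, either inessential (so $D$ trivial) or a meridian of $K$ bounding a disk in $E(K)$ (so $K$ trivial). So $D\subset E(H\cup B)$, and $\partial D$ lies in the part of $\partial H$ disjoint from $A$, which is also contained in $\partial(H-\operatorname{int}B)$, so $D$ is properly embedded in $E(H-\operatorname{int}B)$. If $\partial D$ bounded a disk in $\partial(H-\operatorname{int}B)$, that disk would either avoid $d_1,d_2$ — making $\partial D$ inessential in $\partial H$ — or contain one of them, forcing $\partial D$ to be isotopic on $\partial(H\cup B)$ to $c$ and hence $c$ to bound a disk in $E(H\cup B)$, contradicting incompressibility of $A$ (the remaining configuration, in which such a disk contains both $d_1$ and $d_2$, occurs only when $H-\operatorname{int}B$ is connected, and is ruled out by tracing the resulting solid torus and using again that $E(H-\operatorname{int}B)$ is irreducible and not a positive-genus handlebody). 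Thus $\partial D$ is essential in $\partial(H-\operatorname{int}B)$ and $D$ is a compressing disk for $E(H-\operatorname{int}B)$, contradicting its boundary-irreducibility. Hence $E(H)$ has no compressing disk. I expect the main obstacle to be exactly the step where the outermost disk $D_0$ lands in $E(H\cup B)$: this is precisely where the two hypotheses are needed, through the disconnectedness of $\partial(H\cup B)-\operatorname{int}A$ in the first case and the "primitive attaching curve yields a positive-genus handlebody" observation in the second.
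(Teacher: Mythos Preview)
Your argument is correct and follows essentially the same route as the paper: assume a compressing disk $D$ for $E(H)$, minimise $D\cap A$, use an outermost-arc analysis (with $K$ nontrivial forcing the outermost piece into $E(H\cup B)$, then the two alternative hypotheses each yielding a contradiction), and finally, when $D\cap A=\emptyset$, study how the disk $D'\subset\partial(H-\operatorname{int}B)$ bounded by $\partial D$ meets $d_1,d_2$. The only organisational difference is that you first isolate the incompressibility of $A$ and reuse it in the ``$D'$ contains one $d_i$'' step, whereas the paper appeals to non-splitness directly there; your treatment of the ``$D'$ contains both $d_i$'' case is terser than the paper's but encodes the same idea (namely that $D$ cuts the handlebody $E(H\cup B)$ so that $E(H-\operatorname{int}B)$ becomes a positive-genus handlebody).
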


\begin{proof}
Suppose that $E(H)$ is boundary-reducible.
Let $D$ be a compressing disk in $E(H)$.
Put $A=S\cap E(H)$.
If $D\cap A\neq\emptyset$, then we may assume that $D\cap A$ consists of
essential arcs in $A$, since $E(H)$ is irreducible.
Since the knot $K$ is nontrivial, an outermost disk $\delta$ of $D$ is
contained in $E(H\cup B)$.
If $H-\operatorname{int}B$ is not a handlebody-knot, then the arc
$\delta\cap(H-\operatorname{int}B)$ connects the different components of
$H-\operatorname{int}B$ on $\partial(H-\operatorname{int}B)$, a
contradiction.
If $E(H\cup B)$ is a handlebody, then $\delta$ cuts $E(H\cup B)$ into
$E(H-\operatorname{int}B)$, which is a handlebody.
This implies that $H-\operatorname{int}B$ is trivial, which contradicts
that $E(H-\operatorname{int}B)$ is boundary-irreducible.
Then $D\cap A=\emptyset$, and so $D$ is in $E(H-\operatorname{int}B)$.
Since $E(H-\operatorname{int}B$) is boundary-irreducible, $D$ is
inessential in $E(H-\operatorname{int}B)$.
Let $D'$ be a disk in $\partial E(H-\operatorname{int}B)$ such that
$\partial D'=\partial D$.

Let $D_1,D_2$ be the disks such that $S\cap H=D_1\cup D_2$.
If $D'\cap(D_1\cup D_2)=\emptyset$, then $\partial D'$ is inessential in
$\partial E(H)$, which contradicts that $D$ is essential in $E(H)$.
If $D'\cap(D_1\cup D_2)=D_1$ or $D'\cap(D_1\cup D_2)=D_2$, then the
$2$-sphere $S'=D'\cup D$ can be slightly isotoped so that
$S'\cap(H-\operatorname{int}B)=\emptyset$, which contradicts that
$H-\operatorname{int}B$ is non-split, since $S'$ separates $D_1$ and
$D_2$.
Thus $D_1,D_2\subset D'$.
If $H-\operatorname{int}B$ is not a handlebody-knot, then $D'$ connects
the different components of $H-\operatorname{int}B$ on
$\partial(H-\operatorname{int}B)$, a contradiction.
If $E(H\cup B)$ is a handlebody, then the $2$-sphere $S'=D'\cup D$ can
be slightly isotoped so that $D'$ is properly embedded in
$H-\operatorname{int}B$.
Then $S'$ separates a handlebody $E(H\cup B)$ into a solid torus and a
handlebody which is the exterior of $H-\operatorname{int}B$.
This contradicts that $H-\operatorname{int}B$ is nontrivial.
\end{proof}

\begin{example} \label{ex:5_4,6_14,6_15}
We show that any two of the handlebody-knots $5_4$, $5_4^*$, $6_{14}$,
$6_{14}^*$, $6_{15}$ and $6_{15}^*$ are not equivalent, where $5_4$,
$6_{14}$ and $6_{15}$ are the handlebody-knots depicted in
Figure~\ref{fig:5_4,6_14,6_15withS}, and $5_4^*$, $6_{14}^*$ and
$6_{15}^*$ are their mirror images, respectively.

Let $H$ be one of the handlebody-knots $5_4$, $5_4^*$, $6_{14}$,
$6_{14}^*$, $6_{15}$ and $6_{15}^*$.
Let $S$ be the knotted handle decomposing sphere for $H$ depicted in
Figure~\ref{fig:5_4,6_14,6_15withS}, where $S$ bounds $(B,K;H)$ and $K$
is a trefoil knot.
By Proposition~\ref{prop:b-irreducible without ball}, $E(H)$ is
boundary-irreducible.
By Proposition~\ref{prop:khds with ball}, $\{S\}$ is a maximal unnested
set of knotted handle decomposing spheres for $H$, since the trivial
handlebody-knot $H\cup B$ has no knotted handle decomposing sphere.
Then $S$ is unique by Theorem~\ref{thm:decomposition}, which implies
that the pair $(K,H-\operatorname{int}B)$ is an invariant of $H$.
Hence any two of the handlebody-knots $5_4$, $5_4^*$, $6_{14}$,
$6_{14}^*$, $6_{15}$ and $6_{15}^*$ are not equivalent.
\end{example}

\begin{figure}
\begin{center}
\includegraphics{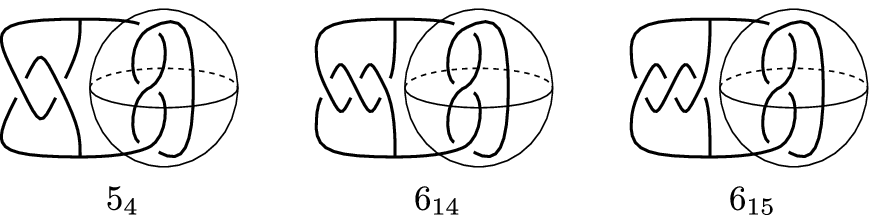}
\end{center}
\caption{}
\label{fig:5_4,6_14,6_15withS}
\end{figure}

\begin{proposition}
There exists a sequence of handlebody-knots $H_n$
$(n\in\mathbb{N}\cup\{0\})$ satisfying the following conditions.
\begin{itemize}
\item
$H_0$ is the trivial genus two handlebody-knot,
which has no knotted handle decomposing sphere.
\item
$H_n$ has a unique knotted handle decomposing sphere
$S_n$ which bounds $(B_n,K_n;H_n)$ for $n\geq1$.
\item
$H_n\cup B_n$ is equivalent to $H_{n-1}$ as a handlebody-knot.
\end{itemize}
\end{proposition}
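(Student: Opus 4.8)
The plan is to exhibit the sequence explicitly and then read off the three listed properties by induction on $n$, the work being done by Theorem~\ref{thm:decomposition}, Propositions~\ref{prop:khds with ball} and \ref{prop:b-irreducible with ball}, and Example~\ref{ex:5_4,6_14,6_15}.

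For the base, let $H_0$ be the trivial genus two handlebody-knot; since the decomposition by $1$-decomposing spheres is unique for a trivial handlebody-knot, $H_0$ has no knotted handle decomposing sphere, which is the first bullet. For $H_1$ take the handlebody-knot $5_4$ of Figure~\ref{fig:5_4,6_14,6_15withS} together with the sphere $S_1$, ball $B_1$ and trefoil $K_1$ of Example~\ref{ex:5_4,6_14,6_15}. There it is shown that $E(H_1)$ is boundary-irreducible, that $\{S_1\}$ is a maximal unnested set of knotted handle decomposing spheres for $H_1$ -- hence, by Theorem~\ref{thm:decomposition}, that $S_1$ is the unique such sphere up to isotopies and annulus-moves -- and that $H_1\cup B_1$ is the trivial genus two handlebody-knot, i.e.\ $H_1\cup B_1=H_0$.

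For the inductive step, assume $H_{n-1}$ ($n\ge 2$) has been built with $E(H_{n-1})$ boundary-irreducible and with $S_{n-1}$ its unique knotted handle decomposing sphere, bounding $(B_{n-1},K_{n-1};H_{n-1})$. I would obtain $H_n$ by a modification supported in a $3$-ball $B_n$: choose $B_n$ so that $B_n\cap H_{n-1}$ is an unknotted $1$-handle slab of $H_{n-1}$, positioned so as to be threaded through the knotting already exposed by $S_{n-1}$ rather than lying beside it, and replace that slab by one tied, inside $B_n$, into a prescribed nontrivial prime knot $K_n$; then set $H_n:=(H_{n-1}-\operatorname{int}B_n)\cup(\text{the knotted slab})$ and $S_n:=\partial B_n$. (Honestly this step is best presented by drawing the spatial graph of $H_n$ with a repeated ``insert $K_n$'' box.) By construction $H_n\cup B_n=H_{n-1}$ (the third bullet), $S_n\cap H_n$ consists of two parallel essential disks, $S_n$ bounds $(B_n,K_n;H_n)$ with $K_n$ nontrivial, and $H_n$ again has genus two. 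Since $E(H_n\cup B_n)=E(H_{n-1})$ is boundary-irreducible, Proposition~\ref{prop:b-irreducible with ball} shows $E(H_n)$ is boundary-irreducible; as observed after Definition~\ref{def:n-decomposing sphere} this forces $S_n\cap E(H_n)$ to be incompressible, and $\operatorname{genus}H_n=2$ forces it to be not boundary-parallel, so $S_n$ is a knotted handle decomposing sphere for $H_n$.

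It remains to see that $S_n$ is the \emph{only} knotted handle decomposing sphere for $H_n$ up to isotopies and annulus-moves; by Theorem~\ref{thm:decomposition} it suffices to check that a maximal unnested set of such spheres for $H_n$ has a single element. Given an unnested set $\{T_1,\dots,T_N\}$ with $N\ge2$ ($T_i$ bounding $(C_i,L_i;H_n)$), one caps off $C_2,\dots,C_N$ and applies Proposition~\ref{prop:khds with ball} -- its exceptional case cannot occur since $\operatorname{genus}H_n=2$ -- to produce a knotted handle decomposing sphere for $H_n\cup C_2\cup\cdots\cup C_N$, and, using the data of the construction (that $B_n$ is the ball realizing the passage from $H_n$ to $H_{n-1}$, so that $S_n$ lies in the maximal unnested set), one arrives at an unnested set of size $N-1\ge1$ for $H_{n-1}$, contradicting the inductive hypothesis. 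Granting this, $H_n$ has a unique knotted handle decomposing sphere and $E(H_n)$ is boundary-irreducible, so the construction continues indefinitely and the proposition follows. The hard part is the construction itself, and in particular verifying that inserting $K_n$ in a position threaded through the earlier knotting (together with $K_n$ being prime) genuinely prevents a second, disjoint knotted handle decomposing sphere from appearing -- the tension being that $B_n$ must still ``remember'' enough structure that capping it yields $H_{n-1}$ rather than $H_0$; once that is arranged, everything else is bookkeeping with the cited results.
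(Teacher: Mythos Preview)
Your inductive bookkeeping does not close. Suppose the maximal unnested set for $H_n$ has $N\ge2$ elements and, as you say, contains $S_n$. Capping off $B_n$ and applying Proposition~\ref{prop:khds with ball} produces an unnested set of $N-1$ knotted handle decomposing spheres for $H_{n-1}=H_n\cup B_n$. But the inductive hypothesis is that $H_{n-1}$ has a \emph{unique} such sphere, i.e.\ its maximal unnested set has size~$1$, not size~$0$. So you only conclude $N-1\le 1$, hence $N=2$; there is no contradiction. The case $N=2$ is exactly the heart of the matter, and your final paragraph essentially concedes that you have not handled it.

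The paper's proof confronts $N=2$ head on with arguments that genuinely depend on the explicit family (their $H_1$ is $6_{14}$, and $H_n$ is built from a specific prime tangle $T$ iterated $n-1$ times). Writing $\{S_n,S_n'\}$ for the maximal unnested set, Proposition~\ref{prop:khds with ball} identifies $S_n'$ with $S_{n-1}$ inside $H_{n-1}$, so $K_n'$ is the positive trefoil and $(H_n\cup B_n)-\operatorname{int}B_n'$ is a neighborhood of $K_{n-1}^-$. One then analyzes a boundary loop $l$ of $S_n'$ on $\partial(H_n-\operatorname{int}B_n)$: if $l$ is essential there, primeness of the core knot $K_n^-$ of $H_n-\operatorname{int}B_n$ (established via Lickorish's tangle-sum criterion) forces the satellite knot $K_n^-$ to equal a trefoil, absurd; if $l$ is inessential, one deduces $K_n^-\simeq K_{n-1}^-$, which is ruled out by a Fox $3$-coloring parity computation specific to the tangle $T$. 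None of this is ``bookkeeping'' --- it is the substance of the proof, and it requires a concrete construction rather than the schematic ``insert $K_n$'' you describe.
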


\begin{figure}[htbp]
\begin{center}
\includegraphics[scale=0.4]{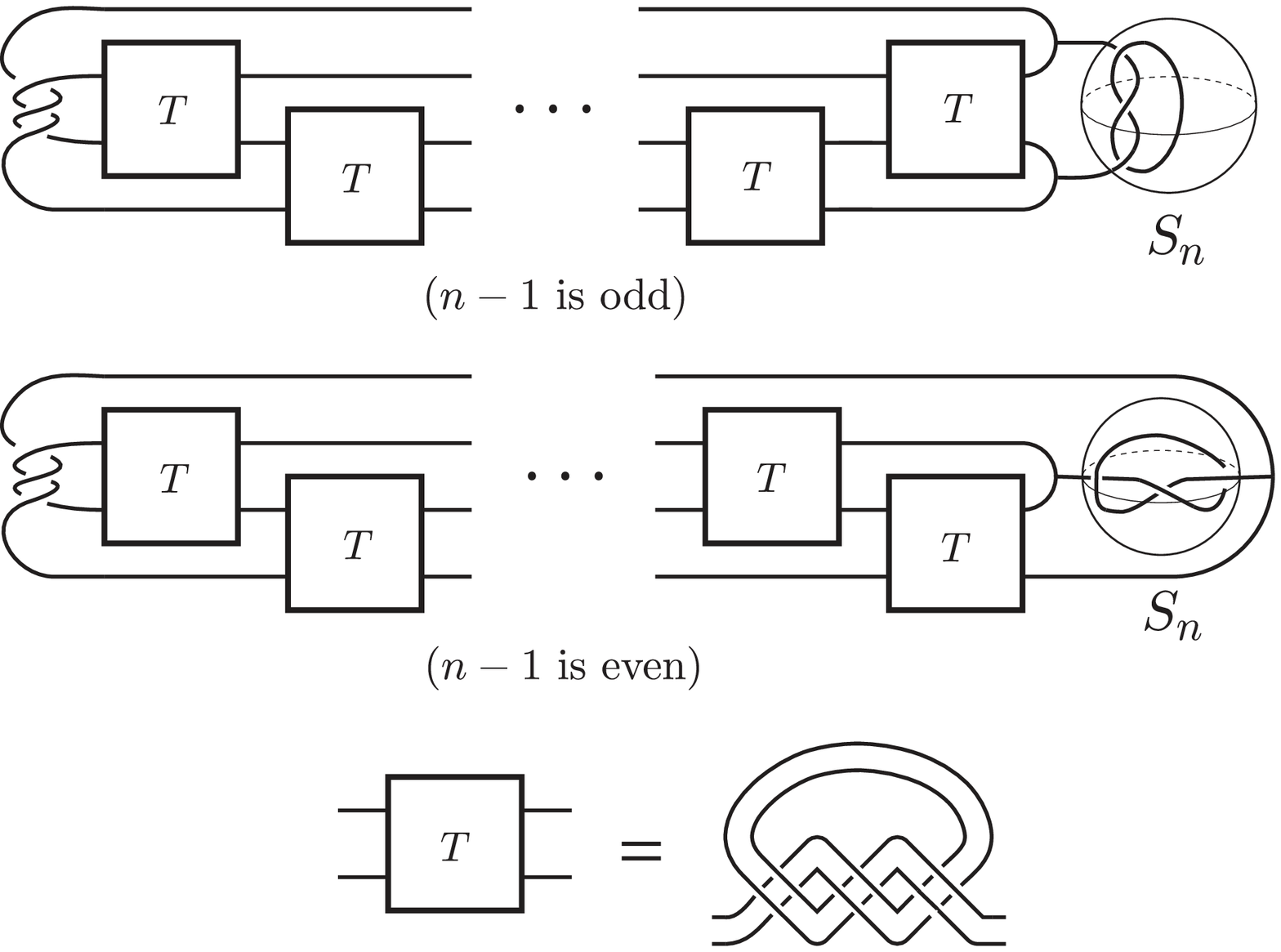}
\end{center}
\caption{}
\label{fig:H_n}
\end{figure}

\begin{proof}
Let $H_0$ be the trivial genus two handlebody-knot.
For $n\geq1$, let $H_n$ be the genus two handlebody-knot with $n-1$
tangles $T$ and a $2$-sphere $S_n$ bounding $(B_n,K_n;H_n)$ as depicted
in Figure~\ref{fig:H_n}.
Then $H_n\cup B_n$ is equivalent to $H_{n-1}$.
We remark that $H_1$ is the irreducible handlebody-knot $6_{14}$.
It follows by Proposition~\ref{prop:b-irreducible with ball} that $H_n$
is irreducible for $n\geq1$.
Then $S_n$ is a knotted handle decomposing sphere for $H_n$.

We prove by induction on $n$ that $S_n$ is a unique knotted handle
decomposing sphere for $H_n$.
We already showed that $S_1$ is a unique knotted handle decomposing
sphere for $H_1$ in Example~\ref{ex:5_4,6_14,6_15}.
Assume that $S_{n-1}$ is a unique knotted handle decomposing sphere for
$H_{n-1}$.
Suppose that $S_n$ is not a unique knotted handle decomposing sphere for
$H_n$.
Then, by Lemma~\ref{lem:disjoint khds} and
Theorem~\ref{thm:decomposition}, there is a knotted handle decomposing
sphere $S_n'$ for $H_n$ which bounds $(B_n',K_n';H_n)$ such that the set
$\{S_n,S_n'\}$ is a maximal unnested set of knotted handle decomposing
spheres for $H_n$.

Let $K_n^-$ be the core of $H_n-\operatorname{int}B_n$, which is a
satellite knot.
Let $T'$ be the tangle obtained from $T$ and $3$ half twists as the
leftmost tangle of $K_n^-$ in Figure~\ref{fig:H_n}.
Then $T$ and $T'$ are prime tangles (cf.~\cite{HayashiMatsudaOzawa99}).
Since $K_n^-$ is obtained from $T'$ and $n-2$ copies of $T$ by tangle
sum, $K_n^-$ is a prime knot~\cite{Lickorish81}.
It follows by Proposition~\ref{prop:khds with ball} that $S'_n$
corresponds to $S_{n-1}$.
Hence $K_n'$ is the positive trefoil knot, and
$(H_n\cup B_n)-\operatorname{int}B'_n$ is a regular neighborhood of
$K_{n-1}^-$.
A loop $l$ of $S'_n\cap\partial H_n$ is in
$\partial(H_n-\operatorname{int}B_n)$, since the set $\{S_n,S_n'\}$ is
unnested.

If $l$ is essential in $\partial(H_n-\operatorname{int}B_n)$, then $l$
is a meridian loop of a solid torus $H_n-\operatorname{int}B_n$.
By the primeness of $K_n^-$, the positive trefoil knot $K_n'$ is
equivalent to the satellite knot $K_n^-$ for $n>1$, a contradiction.

If $l$ is inessential in $\partial(H_n-\operatorname{int}B_n)$, then $l$
bounds a disk $D$ in $\partial(H_n-\operatorname{int}B_n)$.
Let $D_1,D_2$ be the disks such that $S_n\cap H_n=D_1\cup D_2$.
Since $l$ is essential in $\partial H_n$,
$D\cap(D_1\cup D_2)\neq\emptyset$.
If $D$ contains both $D_1$ and $D_2$, then $l$ is a separating loop in
$\partial H_n$ and $\partial H_{n-1}$, which contradicts that
$S_{n-1}\cap\partial H_{n-1}$ consists of non-separating disks.
Thus $D$ contains either $D_1$ or $D_2$, which implies that
$l$ is parallel to the loops of $S_n\cap\partial H_n$.
Then $H_n-\operatorname{int}B_n$ and
$(H_n\cup B_n)-\operatorname{int}B'_n$ are equivalent as
handlebody-knots.
It follows that $K_n^-$ and $K_{n-1}^-$ are equivalent, which
contradicts that $K_m^-$ has a non-trivial Fox $3$-coloring if and only
if $m$ is odd, since the replacement of the tangle $T$ with the trivial
tangle does not change the number of Fox $3$-colorings.

Therefore $S_n$ is a unique knotted handle decomposing sphere for
$H_n$.
This completes the proof.
\end{proof}

\section{Handlebody-knots and their exteriors}
\label{sec:hdb-knots and their exteriors}

In this section, we show that some genus two handlebody-knots with a
knotted handle decomposing sphere can be determined by their exteriors.
As an application, we show that the exteriors of the handlebody-knots
$6_{14}$ and $6_{15}$ are not homeomorphic.
  
\begin{theorem} \label{thm:hdb-knots and their exteriors}
For $i=1,2$, let $H_i$ be an irreducible genus two handlebody-knot with
a knotted handle decomposing sphere $S_i$ bounding $(B_i,K_i;H_i)$ such
that $B_i$ contains all spheres in a maximal unnested set of knotted
handle decomposing spheres for $H_i$.
Suppose that $E(H_i\cup B_i)$ is a handlebody and that
$H_i-\operatorname{int}B_i$ is a nontrivial handlebody-knot for
$i=1,2$.
Then $H_1$ and $H_2$ are equivalent if and only if there is an
orientation preserving homeomorphism from $E(H_1)$ to $E(H_2)$.
\end{theorem}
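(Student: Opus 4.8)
The plan is to reduce the equivalence of $H_1$ and $H_2$ to the equivalence of the "pieces" into which the canonical knotted handle decomposing sphere cuts each handlebody-knot. The forward direction is immediate: an equivalence of handlebody-knots restricts to an orientation-preserving homeomorphism of the exteriors (after choosing orientations). For the converse, suppose $f\colon E(H_1)\to E(H_2)$ is an orientation-preserving homeomorphism. The key structural input is Theorem~\ref{thm:decomposition}: each $S_i$ is, up to isotopy and annulus-moves, determined by $H_i$, and $A_i := S_i\cap E(H_i)$ is an incompressible, not boundary-parallel annulus in $E(H_i)$. By hypothesis all spheres of a maximal unnested set lie in $B_i$, so $A_i$ together with its parallel copies carries the full JSJ-type annular structure of $E(H_i)$ relevant to knotted handles; in particular $A_i$ should be (isotopic to) a canonical annulus in the characteristic submanifold of $E(H_i)$. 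The first step is therefore to show $f(A_1)$ can be isotoped to $A_2$. This follows from the uniqueness of the maximal unnested set (Theorem~\ref{thm:decomposition}) and the fact that annulus-moves and isotopies are the only ambiguity, provided one checks that $f$ respects the "knottedness" data — i.e. that $f$ cannot carry the annulus $A_1$ onto a boundary-parallel or compressible annulus, which is guaranteed because those properties are intrinsic to the homeomorphism type of the exterior.

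Once $f(A_1)$ is isotoped to $A_2$, the homeomorphism $f$ cuts along $A_1$ and $A_2$: it restricts to a homeomorphism between $E(H_1)$ cut along $A_1$ and $E(H_2)$ cut along $A_2$. Cutting $E(H_i)$ along $A_i$ produces two pieces, one of which is $E(K_i)$ (the exterior of the knot $K_i$, coming from $B_i$) and the other of which is $E(H_i - \operatorname{int}B_i)$; here we use the hypothesis that $E(H_i\cup B_i)$ is a handlebody and $H_i-\operatorname{int}B_i$ is a nontrivial handlebody-knot, so that the decomposition of $E(H_i)$ along $A_i$ exactly separates the "knotted tube" from the rest. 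The second step is to argue that $f$ sends the $E(K_1)$ piece to the $E(K_2)$ piece and the $E(H_1-\operatorname{int}B_1)$ piece to $E(H_2-\operatorname{int}B_2)$: this is forced because $E(K_i)$ has a single torus boundary component while $E(H_i-\operatorname{int}B_i)$ has a genus two boundary component (its boundary contains the genus two surface $\partial(H_i-\operatorname{int}B_i)$), so the two pieces are topologically distinguishable, and a homeomorphism must match like with like. Hence $f$ induces a homeomorphism $E(K_1)\to E(K_2)$, whence by Gordon--Luecke $K_1$ and $K_2$ are equivalent as knots, and a homeomorphism $E(H_1-\operatorname{int}B_1)\to E(H_2-\operatorname{int}B_2)$.

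The third step is to promote the homeomorphism of the piece $E(H_i-\operatorname{int}B_i)$ to an equivalence of $H_1-\operatorname{int}B_1$ and $H_2-\operatorname{int}B_2$ as handlebody-knots, and then to reassemble. For the reassembly, one needs to know how the annulus $A_i$ sits on the boundary of each piece: on the $E(K_i)$ side, $\partial A_i$ consists of two longitudes of $K_i$ (this is built into the definition of "$S$ bounds $(B,K;H)$": the core of $B_i - \operatorname{int}H_i$ is $K_i$ and the two disks of $S_i\cap H_i$ are parallel), and on the other side $\partial A_i$ consists of two parallel copies of $\partial D_i$ where $D_i$ is an essential disk in $H_i-\operatorname{int}B_i$. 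So both gluings are rigid: the knotted tube is glued in along the longitude of $K_i$ with a specified framing, determined by the pattern. The main obstacle — and the place where the "genus two" and "$E(H_i\cup B_i)$ is a handlebody" hypotheses do real work — is the possible ambiguity in the framing with which the knotted handle is attached, and the possible automorphisms of the piece $E(H_i-\operatorname{int}B_i)$ that could change the isotopy class of the gluing curve $\partial D_i$ on $\partial(H_i-\operatorname{int}B_i)$. To control this I would use that $H_i - \operatorname{int}B_i$ is a nontrivial genus two handlebody-knot and invoke the uniqueness of the essential disk system / the rigidity of genus two handlebody-knots (Tsukui, as cited): an essential disk $D_i$ in $H_i-\operatorname{int}B_i$ whose complement is the handlebody $E(H_i\cup B_i)$ is unique up to the mapping class group of the handlebody-knot, so the gluing is determined. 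Combining, $f$ is isotopic (rel nothing) to the restriction of an equivalence $S^3\to S^3$ carrying $H_1$ to $H_2$; the orientation-preserving hypothesis on $f$ is exactly what is needed to rule out a mirror and to extend over the knot exterior pieces using Gordon--Luecke. This yields the equivalence of $H_1$ and $H_2$.
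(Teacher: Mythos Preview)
Your overall strategy matches the paper's: show that $f$ carries $A_1 = S_1 \cap E(H_1)$ to $A_2 = S_2 \cap E(H_2)$, then extend $f$ across $H_1$ using Gordon--Luecke. But the execution has two problems, one of which is a genuine gap.

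\textbf{The main gap is in your first step.} You assert that $f(A_1)$ can be isotoped to $A_2$ by invoking Theorem~\ref{thm:decomposition} and gesturing at the characteristic submanifold. But Theorem~\ref{thm:decomposition} is a statement about knotted handle decomposing \emph{spheres}, and to apply it you must first know that $f(A_1)$ caps off to such a sphere --- i.e.\ that each component of $\partial f(A_1)$ bounds a disk in $H_2$. Nothing you have said forces this: $f(A_1)$ is only known to be an essential separating annulus in $E(H_2)$, and its boundary curves could in principle be non-meridional on $\partial H_2$. The paper closes this gap with a separate lemma (Lemma~\ref{lem:cabling or khds}) which, using Kobayashi's classification of essential annuli in a genus two handlebody together with the classification of essential annuli in knot exteriors, shows that every essential separating annulus in $E(H)$ is isotopic either to a cabling annulus for $H-\operatorname{int}B$ or to an annulus coming from a knotted handle decomposing sphere. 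Since a cabling annulus cuts off a solid torus and $A_1$ does not, $f(A_1)$ must be of the second type. This lemma is where the hypotheses that $E(H_i\cup B_i)$ is a handlebody and that $B_i$ contains a full maximal unnested set are actually used; your JSJ sketch does not engage them.

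\textbf{A secondary error.} You misidentify the pieces: cutting $E(H_i)$ along $A_i$ yields $B_i-\operatorname{int}H_i$ (the exterior of $K_i$) and $E(H_i\cup B_i)$ (the genus two \emph{handlebody}), not $E(H_i-\operatorname{int}B_i)$. Since $H_i-\operatorname{int}B_i$ is a solid torus, the latter is a knot exterior with torus boundary, so your boundary-genus argument as written is mislabeled (though the correctly named pieces do differ in boundary genus). Once the pieces are correctly identified the reassembly is much simpler than your third step: after extending $f$ across the $1$-handle $H_1\cap B_1$ via Gordon--Luecke applied to $K_i$, one obtains a homeomorphism $E(H_1-\operatorname{int}B_1)\to E(H_2-\operatorname{int}B_2)$ of knot exteriors, and a \emph{second} application of Gordon--Luecke extends across the remaining solid torus. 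No appeal to Tsukui or to framing rigidity is needed.
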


An annulus is \textit{essential} if $A$ is incompressible and not
boundary-parallel.
To prove Theorem~\ref{thm:hdb-knots and their exteriors}, we give some
lemmas.

\begin{lemma}[{\cite[15.26 Lemma]{BurdeZieschang85}}]
\label{lem:annulus in E(K)}
Let $K$ be a knot in $S^3$.
If $E(K)$ contains a essential annulus $A$, then either
\begin{enumerate}
\item $K$ is a composite knot and $A$ can be extended to a decomposing sphere for $K$,
\item $K$ is a torus knot and $A$ can be extended to an unknotted torus or
\item $K$ is a cable knot and $A$ is the cabling annulus.
\end{enumerate}
\end{lemma}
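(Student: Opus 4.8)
The plan is to read off everything from the slope of $\partial A$ on the boundary torus $T=\partial E(K)$. The two components of $\partial A$ are disjoint essential simple closed curves on $T$, hence parallel on $T$, so they share a common slope $\gamma$; I would then split into two cases according to whether $\gamma$ is the meridian $\mu$ of $K$.

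\emph{Meridional case.} If $\gamma=\mu$, cap $A$ off with the two meridian disks of $N(K)$ bounded by $\partial A$; the result is a $2$-sphere $S\subset S^3$ meeting $K$ transversely in two points, i.e.\ a decomposing sphere for $K$. It remains to check that the decomposition is nontrivial. If it were trivial, one of the two balls $B$ bounded by $S$ would meet $K$ in an unknotted arc, and then $B\cap E(K)$ would be a solid torus in which $A$ is parallel to an annulus of $\partial E(K)$, so $A$ would be boundary-parallel, contrary to hypothesis. Hence $K$ is composite and $A$ extends to a decomposing sphere, which is conclusion~(1).

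\emph{Non-meridional case.} Here I would cap $A$ off with one of the two subannuli $A'\subset T$ cobounded by $\partial A$ and push the result slightly into $\operatorname{int}E(K)$, obtaining a closed surface $\widehat T$ properly embedded in $\operatorname{int}E(K)\subset S^3$; since $S^3$ contains no closed non-orientable surface $\widehat T$ is a torus, and since $A$ is not boundary-parallel neither is $\widehat T$. As $N(K)$ is disjoint from $\widehat T$, the core of $K$ lies in one component of $S^3\setminus\widehat T$, whose closure $X$ then contains $N(K)$. Two classical facts now drive the argument: a torus in $S^3$ bounds a solid torus on at least one side, and is never incompressible in $S^3$. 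I would split on whether $\widehat T$ is unknotted. If it is, both complementary pieces are solid tori; a compressing disk for $\widehat T$ from the side $X'=S^3\setminus X$ lies in $E(K)$, hence (else its boundary would be isotopic in $\widehat T$ to the essential core of $A$) it must meet $A$, and resolving this by an innermost-circle/outermost-arc argument forces the core of $K$ onto $\widehat T$ up to isotopy; so $K$ lies on a standardly embedded torus, i.e.\ $K$ is a torus knot, and $A\subset\widehat T$ extends to the unknotted torus $\widehat T$ --- conclusion~(2). If $\widehat T$ is knotted, it bounds a solid torus on exactly one side, and the same compressing-disk analysis rules out that side being $X'$; so $X=U$ is a solid torus whose core $K'$ is a nontrivial knot, with $N(K)\subset\operatorname{int}U$, and (again using that $A$ is essential) $K$ is geometrically essential in $U$ and isotopic onto $\partial U=\widehat T$ as a $(p,q)$-curve; thus $K$ is a cable of $K'$ with $A$ the cabling annulus --- conclusion~(3).

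I expect the main obstacle to be the bookkeeping tacitly invoked in the non-meridional case: one must put compressing disks in minimal position with respect to $A$, discard circle intersections using the incompressibility of $A$, and check that every outermost arc either lowers the intersection number or produces a boundary-compression of $A$ --- and since $\partial E(K)$ is incompressible for a nontrivial $K$, such a boundary-compression forces $A$ to be boundary-parallel, a contradiction. One must also verify in the knotted case that the core of $A$ realizes the regular-fiber (cabling) slope $pq$ on $\partial N(K)$, so that $A$ is genuinely the cabling annulus. A cleaner but heavier alternative is to invoke the characteristic-submanifold theorem of Jaco--Shalen and Johannson: $A$ may be isotoped to a vertical annulus in a Seifert-fibered piece $\Sigma$ of $E(K)$, and then conclusions (1), (2), (3) correspond respectively to $\Sigma$ being cut off by a meridional annulus, $\Sigma$ being all of $E(K)$ (so $E(K)$ is Seifert fibered, forcing $K$ to be a torus knot), and $\Sigma$ being adjacent to an essential torus in $E(K)$ (so $K$ is a satellite, and the fibering of $\Sigma$ forces it to be a cable).
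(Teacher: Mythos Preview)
The paper gives no proof of this lemma at all; it is quoted as \cite[15.26 Lemma]{BurdeZieschang85} and used as a black box in the proof of Lemma~\ref{lem:cabling or khds}. So there is no in-paper argument to compare against. Your outline---split on whether the boundary slope is the meridian, cap $A$ with meridian disks in the meridional case to get a decomposing sphere, and cap $A$ with a subannulus of $\partial N(K)$ in the non-meridional case to get a closed torus $\widehat T$ whose knottedness separates the torus-knot and cable-knot conclusions---is exactly the classical argument recorded in Burde--Zieschang, so in that sense your approach and the cited source agree.

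The place where your sketch is genuinely thin (beyond the bookkeeping you already flag) is the step ``forces the core of $K$ onto $\widehat T$.'' What you actually need is that the boundary slope $\gamma$ is an \emph{integral} slope, i.e.\ a longitude of $K$, so that the core of $A$ is isotopic to $K$ itself rather than to a nontrivial cable of $K$. This does follow from the setup---if $\gamma$ were non-integral, the core of $A$ would simultaneously be a satellite of $K$ and a curve on $\widehat T$, and chasing this through the unknotted/knotted dichotomy forces $K$ to be trivial---but it is not a consequence of the compressing-disk/outermost-arc manipulation you describe, which only tells you that $A$ is boundary-incompressible. Likewise, in the knotted subcase you should not simply ``rule out'' that $X'$ is the solid-torus side: when $X'$ is a solid torus with $A$ winding $p\ge 2$ times, this is exactly the cable situation viewed from the companion side, and it has to be reconciled with the picture from the $X$ side rather than discarded. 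The overall architecture is correct; these two points are where the details need to be written out carefully.
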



\begin{lemma}[{\cite[Lemma~3.2]{Kobayashi84}}]
\label{lem:annulus in V_2}
If $A$ is a essential annulus in a genus two handlebody $W$, then either
\begin{enumerate}
\item
$A$ cuts $W$ into a solid torus $W_1$ and a genus two handlebody $W_2$
and there is a complete system of meridian disks $\{D_1,D_2\}$ of $W_2$
such that $D_1\cap A=\emptyset$ and $D_2\cap A$ is an essential arc in
$A$, or
\item
$A$ cuts $W$ into a genus two handlebody $W'$ and there is a complete
system of meridian disks $\{D_1,D_2\}$ of $W'$ such that $D_1\cap A$ is
an essential arc in $A$.
\end{enumerate}
\end{lemma}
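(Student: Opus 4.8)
The plan is to prove the lemma by cutting the genus two handlebody $W$ along the essential annulus $A$ and analyzing the resulting pieces, and then locating the required meridian disks inside the genus two piece. The two conclusions should correspond exactly to whether $A$ is separating or non-separating in $W$: a separating $A$ ought to produce case (1), and a non-separating $A$ case (2). So the first step is to determine whether $A$ separates and, in each case, to pin down the homeomorphism type of the pieces.

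First I would identify the pieces. Since $A$ is two-sided and incompressible, $\pi_1(A)\to\pi_1(W)$ is injective and the fundamental group of each component of $W$ cut along $A$ injects into $\pi_1(W)$, which is free of rank two; a subgroup of a free group is free, and each piece is irreducible with nonempty boundary (cutting the irreducible $W$ along the incompressible $A$). By the standard fact that a compact orientable irreducible $3$-manifold with nonempty boundary and free fundamental group is a handlebody, each piece is a handlebody. Because $\chi(A)=0$, cutting preserves Euler characteristic, so the pieces have total Euler characteristic $\chi(W)=-1$. In the separating case this forces handlebody genera with $g_1+g_2=3$, and $g_i=0$ is impossible: a ball piece would exhibit the two components of $\partial A$ as curves bounding disks in $\partial W$, contradicting that they are essential (incompressibility of $A$). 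Hence $\{g_1,g_2\}=\{1,2\}$, giving a solid torus and a genus two handlebody as in (1). In the non-separating case the single piece $W'$ is a connected handlebody with $\chi(W')=-1$, hence of genus two as in (2). I want to emphasize that this step uses only incompressibility and goes through even when $A$ fails to be boundary-incompressible.

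The heart of the argument is the production of the meridian disks. Regard $A$ as a boundary annulus of the genus two piece ($W_2$ in case (1), and as the two copies $A_1,A_2\subset\partial W'$ in case (2)), with core $\gamma$ essential in that piece. I would take a complete meridian system of the piece and isotope its boundary to meet $A$ in the fewest arcs. As $A$ lies on the boundary of the piece, these intersections are arcs of the meridian boundaries inside $A$, and an outermost-arc isotopy removes every arc inessential in $A$ (using that $\gamma$ is essential), leaving only spanning arcs. Cutting the piece along the meridian disks to a $3$-ball and reassembling $A$ from the resulting rectangles, a count against $\chi(A)=0$ together with the connectivity of $A$ and the fact that the genus is exactly two should force the minimal configuration to be the asserted one: a meridian disk $D_1$ disjoint from $A$ and a meridian disk $D_2$ meeting $A$ in a single essential arc in case (1), and a meridian disk $D_1$ meeting $A$ in a single essential arc in case (2).

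The hard part will be exactly this last combinatorial step. One cannot simply invoke the clean theory of cutting along an incompressible and boundary-incompressible surface, because an incompressible, non-boundary-parallel annulus in a handlebody need not be boundary-incompressible: boundary-compressing $A$ can yield an essential disk rather than a boundary-parallel one, so $A$ genuinely may be boundary-compressible. The argument must therefore control the spanning arcs directly and use the smallness of the genus to reduce the finitely many possible arc patterns to the minimal one, instead of appealing to a general structure theorem. This is also why the conclusion only asserts the existence of a convenient meridian system rather than boundary-incompressibility of $A$.
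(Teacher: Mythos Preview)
The paper does not supply its own proof of this lemma: it is quoted verbatim with a citation to \cite[Lemma~3.2]{Kobayashi84} and then used as a black box in the proof of Lemma~\ref{lem:cabling or khds}. There is therefore no in-paper argument to compare your proposal against.

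On its own merits, your outline is correct through the identification of the pieces: the free-subgroup and Euler-characteristic reasoning cleanly pins down the homeomorphism types in the separating and non-separating cases, and your observation that $A$ may well be boundary-compressible (so one cannot simply appeal to a general incompressible--boundary-incompressible structure theorem) is exactly right. But your third step is a plan, not a proof. You assert that an Euler-characteristic and connectivity count on the rectangles obtained by cutting the genus-two piece along a minimal meridian system ``should force'' the intersection pattern to be a single spanning arc on one disk (and, in case~(1), no arcs on the other), and you explicitly flag this as the hard part without carrying it out. That reduction is precisely the content of the lemma: one must actually exhibit the band-sums or disk-slides that lower higher intersection numbers, or, equivalently, boundary-compress $A$ to produce an essential meridian disk disjoint from $A$ and then locate the second meridian disk relative to it and to $A$. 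Until that step is written out, the proposal remains a sketch rather than a proof; if you want a complete argument you should consult Kobayashi's original paper, where this combinatorial work is done.
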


We say that an annulus $A$ is obtained from a knotted handle decomposing
sphere $S$ for a handlebody-knot $H$ when $A=S\cap E(H)$.

\begin{lemma} \label{lem:cabling or khds}
Let $H$ be an irreducible genus two handlebody-knot with a knotted
handle decomposing sphere $S$ bounding $(B,K;H)$ such that $B$ contains
all spheres in a maximal unnested set of knotted handle decomposing
spheres for $H$.
Suppose that $E(H\cup B)$ is a handlebody and that
$H-\operatorname{int}B$ is a nontrivial handlebody-knot.
Then any essential separating annulus in $E(H)$ is isotopic to either a
cabling annulus for $H-\operatorname{int}B$ or an annulus obtained from
a knotted handle decomposing sphere for $H$.
\end{lemma}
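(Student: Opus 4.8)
The plan is to take an essential separating annulus $A$ in $E(H)$ and analyze how it sits relative to the annulus $A_S = S\cap E(H)$ coming from the given knotted handle decomposing sphere. First I would put $A$ in general position with respect to $A_S$ and minimize $|A\cap A_S|$ up to isotopy; since $E(H)$ is irreducible (because $E(H)$ is boundary-irreducible, $H$ being irreducible of genus two), every circle of intersection may be taken essential in both annuli, and there are no arc components of $A\cap\partial A_S$ to worry about beyond what the incompressibility forces. I then consider the two cases $A\cap A_S=\emptyset$ and $A\cap A_S\ne\emptyset$ separately.

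In the case $A\cap A_S=\emptyset$, the annulus $A$ lies either inside $B\cap E(H) = B-\operatorname{int}H = E(K)$ or in the outside piece $E(H\cup B)$, which by hypothesis is a genus two handlebody; note $H-\operatorname{int}B$ sits inside this handlebody as a knotted genus two handlebody-knot and $E(H\cup B)=E(H-\operatorname{int}B)$ after adding the solid torus back is not directly what we want — rather $E(H\cup B)$ is the exterior of $H\cup B$, a handlebody. If $A\subset E(K)$, then by Lemma 4.6 (Burde–Zieschang) $K$ is composite, a torus knot, or a cable knot; the composite and torus-knot options are excluded because $K$ is prime (it bounds a ball meeting a maximal unnested family — by Schubert, $K$ is prime) and, if $K$ were a torus knot, the annulus would extend to an unknotted torus, which one checks is incompatible with $A$ being \emph{separating} and essential in $E(H)$ rather than merely in $E(K)$; hence $A$ is the cabling annulus of $K=$ the core of $H-\operatorname{int}B$, giving the first conclusion. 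If instead $A$ lies in the handlebody $E(H\cup B)$, I would apply Lemma 4.7 (Kobayashi) to the genus two handlebody $W = E(H\cup B)$: in either case (1) or (2) of that lemma one gets a meridian disk of $W$ disjoint from $A$ or meeting it in a single essential arc, and I would push this configuration back across the two parallel disks $S\cap H$ to recognize $A$ (after reattaching) as an annulus obtained from a knotted handle decomposing sphere for $H$ — here the hypothesis that $B$ contains \emph{all} spheres of a maximal unnested set, together with Proposition 4.2 and Theorem 4.4, is what pins down the isotopy class.

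In the case $A\cap A_S\ne\emptyset$, let $F$ be an outermost subannulus cut from $A$ by $A\cap A_S$. If $F$ lies in $E(K)=B-\operatorname{int}H$, then since $K$ is prime we can isotope $F$ off $B$, reducing $|A\cap A_S|$, contradiction — this is exactly the annulus-move/primeness argument of Lemma 4.3. Hence $F$ lies in $E(H\cup B)$; performing the corresponding surgery (annulus-move) on $A_S$ along $F$ produces a new knotted handle decomposing sphere $S'$ for $H$ — here I must check $S'$ is genuinely a khds, using boundary-irreducibility for incompressibility and $g=2$ for non–boundary-parallelism as in the discussion after Definition 1.3 — and reduces the intersection, again contradicting minimality. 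So in fact $A\cap A_S=\emptyset$ after all, and we are back in the first case. The main obstacle I expect is the bookkeeping in the subcase $A\subset E(H\cup B)$: translating the Kobayashi meridian-disk data for the abstract handlebody $W=E(H\cup B)$ into a concrete statement about $A$ being carried by a knotted handle decomposing sphere of $H$ requires controlling how the disks $S\cap H$ sit in $\partial W$ and invoking the uniqueness (Theorem 4.4) plus the ``$B$ contains everything'' hypothesis to rule out that $A$ records some other, smaller knotted handle — essentially ensuring $A$ does not come from a khds nested strictly inside $B$ that is already accounted for.
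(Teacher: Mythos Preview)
Your case analysis has the two sides swapped, and this comes from conflating two different knots. The knot $K$ in ``$S$ bounds $(B,K;H)$'' is the core of $H\cup E(B)$, so $B-\operatorname{int}H$ is $E(K)$; but the \emph{cabling annulus in the conclusion is for the core of $H-\operatorname{int}B$}, whose exterior is $W\cup B$ where $W=E(H\cup B)$. These are different knots. So when you write ``hence $A$ is the cabling annulus of $K=$ the core of $H-\operatorname{int}B$'' you have identified two distinct objects, and your application of Burde--Zieschang to $E(K)$ cannot produce the cabling annulus the lemma is asking for. In the paper's argument it is the \emph{other} case---$A'\subset W$ and essential there---that leads to the cabling conclusion, after a nontrivial step: one must show $A'$ remains essential in the larger manifold $W\cup B=E(H-\operatorname{int}B)$. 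This uses Kobayashi's lemma to locate $A_S$ on the genus-two side of the splitting of $W$ (via an Alexander's theorem / lens space argument), and then separate incompressibility and non--boundary-parallel checks in $W\cup B$. Only then does Burde--Zieschang apply to the tunnel-number-one knot $H-\operatorname{int}B$.

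Conversely, your treatment of $A\subset W$ is where the khds conclusion should \emph{not} generically arise: an essential separating annulus in the handlebody $W$ does not, via Kobayashi's meridian-disk data, cap off to a khds---the paper shows it is the cabling annulus. The khds conclusion comes instead from the boundary-parallel-in-$W$ subcase (parallel to $A_S$) and from the case $\partial A'\subset B\cap\partial H$, where the boundary loops are forced to be parallel to $\partial(S\cap H)$. Finally, your reduction of the $A\cap A_S\neq\emptyset$ case by performing an annulus-move on $A_S$ is not legitimate here: you minimized $|A\cap A_S|$ up to isotopy of $A$, so modifying $A_S$ does not contradict minimality. The paper instead takes the outermost subannulus $F\subset W$ of $A'$, caps it with a disk through $B$, and derives a contradiction from nontriviality of $H-\operatorname{int}B$ and the fact that the trivial handlebody-knot $H\cup B$ admits no khds.
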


\begin{proof}
Let $A'$ be a essential separating annulus in $E(H)$.
Assuming that $A'$ cannot be obtained from a knotted handle decomposing
sphere for $H$, we show that $A'$ is a cabling annulus for
$H-\operatorname{int}B$.
Put $A=S\cap E(H)$ and $W=E(H\cup B)$.
We may assume that $A\cap A'$ consists of essential arcs or loops in
both $A$ and $A'$, and that $|A\cap A'|$ is minimal up to isotopy.
As the proof of Lemma~\ref{lem:disjoint khds}, we may assume that
$A\cap A'$ consists of essential loops.

If $\partial A'$ is contained in $B$, then $A'$ is an annulus obtained
from a knotted handle decomposing sphere for $H$, since each loop of
$\partial A'$ is parallel to $\partial(S\cap H)$.
Hence there is a loop $C$ of $\partial A'$ contained in $W$.

Suppose $A\cap A'\neq\emptyset$.
Let $F$ be the outermost subannulus on $A'$ containing $C$, which is an
annulus properly embedded in $W$.
Since $A'$ is incompressible in $E(H)$, $F$ is incompressible in $W$.
By the minimality of $|A\cap A'|$, $F$ is not boundary-parallel in $W$.
Let $D$ be a disk in $E(H-\operatorname{int}B)$ such that $D\cap W=F$
and $D\cap B$ is a disk $D_0$ in $B$.
If $C$ is essential in $\partial(H-\operatorname{int}B)$, then
$E(H-\operatorname{int}B)$ is boundary-reducible, which implies that
$H-\operatorname{int}B$ is trivial, a contradiction.
Hence $C$ is inessential in $\partial(H-\operatorname{int}B)$.
Let $D'$ be the disk in $\partial(H-\operatorname{int}B)$ such that
$\partial D'=C$.
Let $D_1,D_2$ be the disks such that $S\cap H=D_1\cup D_2$.
If $C$ is parallel to $\partial D_0$ on $\partial(H\cup B)$, then $F$ is
an annulus obtained from a knotted handle decomposing sphere for the
trivial genus two handlebody-knot $H\cup B$, a contradiction.
Thus $D_1,D_2\subset D'$ or $(D_1\cup D_2)\cap D'=\emptyset$, which
contradicts that the $2$-sphere $S'=D'\cup D$ separates $D_1$ and $D_2$,
where $S'$ is slightly isotoped so that $D'$ is properly embedded in
$H-\operatorname{int}B$.
Hence $A\cap A'=\emptyset$, which implies that $A'\subset W$.

The annulus $A'$ is incompressible in $W$, since it is incompressible in
$E(H)$.
If $A'$ is boundary-parallel in $W$, then $A'$ is parallel to $A$ and is
obtained from a knotted handle decomposing sphere for $H$, since $A'$ is
not boundary-parallel in $E(H)$.
Hence $A'$ is essential in the genus two handlebody $W$.

By Lemma~\ref{lem:annulus in V_2}, the separating annulus $A'$ cuts $W$
into a solid torus $W_1$ and a genus two handlebody $W_2$ so that $A'$
winds around $W_1$ at least twice.
If $A$ is contained in $\partial W\cap W_1$, then by attaching a
2-handle $N(D)$ to the solid torus $W_1$, we have a once punctured lens
space $L(p,q)$ ($p\ge 2$), where $D$ is a component of $S\cap H$.
This contradicts Alexander's theorem~\cite{Alexander24}.
Thus $A$ is contained in $\partial W\cap W_2$ and $A'$ cuts $W\cup B$
into $W_1$ and $W_2\cup B$.

Suppose that $A'$ is compressible in $W\cup B$.
Let $D$ be a compressing disk for $A'$ in $W\cup B$.
Then $D$ is contained in $W_2\cup B$, since $A'$ is incompressible in
$W$.
By attaching a 2-handle $N(D)$ to the solid torus $W_1$, we have a once
punctured lens space $L(p,q)$ ($p\ge 2$), a contradiction.
Thus $A'$ is incompressible in $W\cup B$.
Suppose that $A'$ is boundary-parallel in $W\cup B$.
Since $A'$ is not boundary-parallel in $W$, $W_2\cup B$ is a solid torus
$A'\times I$.
Then the solid torus $W_1$ is isotopic to
$W\cup B=E(H-\operatorname{int}B)$, which implies that
$H-\operatorname{int}B$ is trivial, a contradiction.
Thus $A'$ is not boundary-parallel in $W\cup B$.
Therefore $A'$ is essential in $W\cup B=E(H-\operatorname{int}B)$, which
is the exterior of the tunnel number one knot represented by the core
curve of $H-\operatorname{int}B$.
By Lemma~\ref{lem:annulus in E(K)}, $A'$ is a cabling annulus for
$H-\operatorname{int}B$, where we note that a tunnel number one knot is
prime.
\end{proof}

\begin{proof}[Proof of Theorem~\ref{thm:hdb-knots and their exteriors}]
If $H_1$ and $H_2$ are equivalent, then there is an orientation
preserving self-homeomorphism of $S^3$ which sends $H_1$ to $H_2$, which
gives an orientation preserving homeomorphism from $E(H_1)$ to
$E(H_2)$.

Suppose that there is an orientation preserving homeomorphism $f$ from
$E(H_1)$ to $E(H_2)$.
Since any cabling annulus cuts off a solid torus from $E(H_2)$, it
follows from Lemma~\ref{lem:cabling or khds} that
$f(S_1\cap E(H_1))=S_2\cap E(H_2)$.
Since $E(H_i-\operatorname{int}B_i)$ and $B_i-\operatorname{int}H_i$ are
exteriors of knots, by the Gordon-Luecke theorem~\cite{GordonLuecke89},
both of the restrictions of $f$ to $E(H_1-\operatorname{int}B_1)$ and
$B_1-\operatorname{int}H_1$ are extended to homeomorphisms of $S^3$.
Hence $f$ can be extended to a homeomorphism $\hat{f}$ of $S^3$ such
that $\hat{f}(S_1)=S_2$ and $\hat{f}(H_1)=H_2$.
\end{proof}

\begin{example}
By Example~\ref{ex:5_4,6_14,6_15}, neither $6_{15}$ nor $6_{15}^*$ is
equivalent to $6_{14}$.
We recall that each of them has a unique knotted handle decomposing
sphere.
By Theorem~\ref{thm:hdb-knots and their exteriors}, there is no
orientation preserving/reversing homeomorphism from $E(6_{14})$ to
$E(6_{15})$.
Hence $E(6_{14})$ and $E(6_{15})$ are not homeomorphic.
\end{example}

\bibliographystyle{amsplain}

\end{document}